\theoremstyle{definition}
\newtheorem{theo}{Theorem}[section]
\crefname{theo}{Theorem}{Theorems}
\newtheorem{defi}{Definition}[section]
\newtheorem{es}{Example}[section]
\newtheorem{rem}{Remark}[section]
\numberwithin{equation}{section}
\title{On the cubic Pell equation over finite fields}
\author{Simone Dutto*, Nadir Murru**\\
* Politecnico di Torino, Department of Mathematics \\simone.dutto@polito.it \\
** Università degli Studi di Trento, Department of Mathematics \\ nadir.murru@unitn.it}
\date{}
\begin{document}

\maketitle

\begin{abstract}
The classical Pell equation can be extended to the cubic case considering the elements of norm one in $\mathbb Z[\sqrt[3]{r}]$, which satisfy
\begin{equation*}
    x^3 + r y^3 + r^2 z^3 - 3 r x y z = 1.
\end{equation*}
The solution of the cubic Pell equation is harder than the classical case, indeed a method for solving it as Diophantine equation is still missing \cite{Bar03}.
In this paper, we study the cubic Pell equation over finite fields, extending the results that hold for the classical one.
In particular, we provide a novel method for counting the number of solutions in all possible cases depending on the value of $r$.
Moreover, we are also able to provide a method for generating all the solutions.
\end{abstract}

\section{Introduction}
\label{sec:intro}

The Pell equation 
\begin{equation*}
    x^2 - d y^2 = 1,
\end{equation*}
is an important and well studied Diophantine equation, for $d$ a non--square positive integer.
Finding its solutions is equivalent to finding the elements of $\mathbb{Z}[\sqrt{d}]$ of norm one.
There are well known methods for solving this equation.
They are mainly based on continued fractions that allow to find a fundamental solution, which is then used for generating all the other ones.
Currently, there are still several important issues regarding the Pell equation.
For instance, the study of the size of the fundamental solution is an interesting problem addressed in several papers, e.g., \cite{Bou15, Fou16, Xi18}.
Recently, the solvability of simultaneous Pell equations and explicit formulas for their solutions have been also studied in \cite{HPT15, Cip18, FY21}.
Moreover, it is also interesting to study the Pell equation over finite fields, determining the number of solutions and their properties \cite{MV92, TOK10, Tec11, Coh21}.
For further information about the importance of the study of the Pell equation see, e.g., \cite{JW09}.

Thus, it is natural to consider generalizations of the Pell equation, starting from the cubic case.
Considering the connection between the Pell equation and the elements of norm one in a quadratic field, the analogue of the Pell equation in the cubic case is given by the equation
\begin{equation*}
    x^3 + r y^3 + r^2 z^3 - 3 r x y z = 1,
\end{equation*}
where $r$ is a cube--free integer, i.e., we are asking for the elements of norm one in $\mathbb{Z}[\sqrt[3]{r}]$.
First studies of the cubic Pell equation can be found in \cite{Mat89} and \cite{Wol23}.
In \cite{Dau29}, the author proposed a method for solving the cubic Pell equation by means of a generalization of continued fractions due to Jacobi \cite{Jac91}.
However, this method is not always useful for this purpose, since the periodicity of the Jacobi algorithm is still a fascinating open problem for all cubic irrationals.
This question was also addressed, e.g., in \cite{Ber74} and \cite{Ber75}.
The solutions of the cubic Pell equation were studied in \cite{Bal99} from the point of view of recurrent sequences, since Lucas sequences are solutions, up to constants, of the classical Pell equation.
In general, the cubic Pell equation is very hard to solve for any cube--free $r$.
In \cite{Bar03}, the author exhibited an algorithm for finding the fundamental solutions of the cubic Pell equation that works only in some cases.
Thus, the problem of solving the cubic Pell equation is still open.
For more motivation and results about the cubic Pell equation see also \cite{HW18}.

In this paper, we address the problem of solving the cubic Pell equation over finite fields.
In particular, in \cref{sec:qua} we recall the classical Pell equation and its definition as the elements of norm one in a quadratic field.
We also introduce a particular parameterization using a projectivization, which is useful for studying the Pell equation over finite fields and it is also handy for a generalization in the cubic case.
In \cref{sec:ffcon}, we recall the structure of the Pell conic over finite fields obtaining also the results of \cite{MV92} in a different way.
\cref{sec:cub} is devoted to the introduction of the cubic Pell equation.
Here, we also introduce its parameterization that allows to study its structure over finite fields, giving also methods for generating the solutions.
Finally, in \cref{sec:ffcub}, we describe the behavior of the cubic Pell equation over finite fields.

\section{The Pell equation} 
\label{sec:qua}

The classical  Pell equation is the equation of the form
\begin{equation*}
    x^2 - d y^2 = 1,
\end{equation*}
where $d$ is a positive square--free integer and solutions are sought for $(x, y) \in \mathbb{Z}^2$.

In this work we consider the Pell equation in general terms, considering any element $d$ in a field $\mathbb{F}$ and taking the polynomial ring
\begin{align*}
  \mathcal{R}_d 
  := \mathbb{F}[t] / \langle t^2 - d \rangle,
\end{align*}
which inherits from the polynomial product the operation
\begin{equation}
\label{eq:brah}
  (x_1 + y_1 t) \cdot (x_2 + y_2 t) 
  = (x_1 x_2 + d y_1 y _2) + (x_1 y_2 + y_1 x_2) t.
\end{equation}
The conjugate of an element $x + y t \in \mathcal{R}_d$ is defined as $x - y t$.
The product of an element with its conjugate defines the norm
\begin{align*}
  N_d(x + y t) 
  := (x + y t) \cdot (x - y t) 
  = x^2 - d y^2 \in \mathbb{F}.
\end{align*}
The unitary elements of $\mathcal{R}_d$ with respect to the norm $N_d$
\begin{align*}
  \mathcal{U}(\mathcal{R}_d) 
  := \{x + y t \in \mathcal{R}_d \,|\, N_d(x + y t) = 1\}, 
\end{align*}
form a commutative group that is clearly isomorphic to the \emph{Pell conic}
\begin{equation*}
    \mathcal{C}_d := \{(x, y) \in \mathbb{F}^2 \,|\, x^2 - d y^2 = 1\},
\end{equation*}
equipped with the classical Brahmagupta product
\begin{equation*}
    (x_1, y_1) \otimes_d (x_2, y_2) := (x_1x_2 + d y_1y_2, x_1y_2+y_1x_2).
\end{equation*}
Due to this group isomorphism, in the following we will use $\otimes_d$ also for denoting the product over $\mathcal R_d$, so that its dependence on $d$ is highlighted.
The operation $\otimes_d$ over the Pell conic has a geometrical interpretation, since the resulting point is the intersection between the curve and the line through the identity point $(1, 0)$ parallel to that passing through the initial two points.
This interpretation is analogous to that of the elliptic curve operation (see, e.g., \cite{BMD21}).

In order to introduce a parameterization for the Pell conic, we need to characterize the set of the elements of $\mathcal{R}_d$ that are invertible with respect to $\otimes_d$:
\begin{enumerate}
    \item if $d$ is a non--square element in $\mathbb{F}$, then the invertible elements of $\mathcal{R}_d$ are
    \begin{equation*}
        \mathcal{R}_d^{\otimes_d} = \mathcal{R}_d \smallsetminus \{0\};
    \end{equation*}
    \item if $d$ is a square in $\mathbb{F}$ and $s$ is a fixed square root of $d$, then they are
    \begin{equation*}
    \mathcal{R}_d^{\otimes_d} = \mathcal{R}_d \smallsetminus \{0, \pm s y + y t \,|\, y \in \mathbb{F}\}.
    \end{equation*}
\end{enumerate}

\begin{defi}
\label{def:dproj}
The projectivization of $\mathcal{R}_d^{\otimes_d}$ is
\begin{equation}
\label{eq:conproj}
\begin{aligned}
  \mathbb{P}_d 
  & := \big\{[m : n] = \{\lambda (m + n t) \,|\, \lambda \in \mathbb{F}^\times\} \,|\, m + n t \in \mathcal{R}_d^{\otimes_d}\big\} \\
  & = \begin{cases}
    \big\{[m : 1], [1 : 0] \,|\, m \in \mathbb{F}\big\}, & \text{if } d \text{ is a non--square,} \\
    \big\{[m : 1], [1 : 0] \,|\, m \in \mathbb{F} \smallsetminus \{\pm s\}\big\}, & \text{otherwise}
  \end{cases} \\
  & \cong \begin{cases}
    \mathbb{F} \cup \{\alpha\}, & \text{if } d \text{ is a non--square,} \\
    (\mathbb{F} \smallsetminus \{\pm s\}) \cup \{\alpha\}, & \text{otherwise,}
  \end{cases}
\end{aligned}
\end{equation}
where $\alpha$ denotes the point at infinity and, in case of $d$ square, $s \in \mathbb{F}$ is a fixed square root of $d$.
Since the Brahmagupta product $\otimes_d$ consists of homogeneous polynomials, it is well defined also on $\mathbb{P}_d$ and determines a commutative group with identity $[1 : 0]$ and inverse of $[m : n]$ given by $[m : - n]$.
\end{defi}

This projectivization is actually a parameterization of the Pell conic, which is useful for studying some of its properties over finite fields and will be naturally generalized also for the cubic case.
The following theorem provides an explicit group isomorphism between $(\mathbb P_d, \otimes_d)$ and $(\mathcal C_d, \otimes_d)$.
The result was introduced in \cite{BCM10}, here we give a different formulation and a proof that can be adapted to the cubic case.

\begin{theo}
\label{th:conphi}
Given $d \in \mathbb{F}$, there is the group isomorphism
\begin{align*}
  \phi : \big(\mathbb{P}_d, \otimes_d\big) 
  & \xrightarrow{\;\sim\;} \big(\mathcal{C}_d, \otimes_d\big), \\
  [m : n] 
  & \longmapsto \frac{(m, n)^{\otimes_d 2}}{N_d(m, n)} 
  = \left(\frac{m^2 + d n^2}{m^2 - d n^2}, \frac{2 m n}{m^2 - d n^2}\right).
\end{align*}
\end{theo}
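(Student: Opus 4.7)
The plan is to verify in sequence that $\phi$ is well-defined on projective classes, that its image lies in $\mathcal C_d$, that it respects the group law, and that it admits an explicit two-sided inverse. Well-definedness and the image condition are essentially formal: replacing $(m,n)$ by $(\lambda m, \lambda n)$ with $\lambda \in \mathbb F^{\times}$ multiplies both $(m,n)^{\otimes_d 2}$ and $N_d(m,n)$ by $\lambda^2$, so the quotient is invariant, and by the description of $\mathbb{P}_d$ in \cref{def:dproj} the condition $[m:n] \in \mathbb P_d$ forces $N_d(m,n) \neq 0$, so the division makes sense. The image lies in $\mathcal C_d$ by the identity $(m^2+dn^2)^2 - d(2mn)^2 = (m^2-dn^2)^2$, which is really an instance of the multiplicativity $N_d((m,n)^{\otimes_d 2}) = N_d(m,n)^2$.

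For the homomorphism property I would work inside $\mathcal R_d$. Writing $u_i = m_i + n_i t$ for representatives of $[m_i : n_i]$, the formula reads $\phi([m_i : n_i]) = u_i^{\otimes_d 2}/N_d(u_i) = u_i/\bar u_i$, where $\bar u_i$ denotes the conjugate; this quotient makes sense because $u_i \in \mathcal R_d^{\otimes_d}$ is a ring unit and hence so is $\bar u_i$. Commutativity of $\otimes_d$ together with the multiplicativity of $N_d$ then give
\[\phi([m_1:n_1] \otimes_d [m_2:n_2]) = \frac{u_1 u_2}{\overline{u_1 u_2}} = \frac{u_1}{\bar u_1} \otimes_d \frac{u_2}{\bar u_2} = \phi([m_1:n_1]) \otimes_d \phi([m_2:n_2]),\]
so $\phi$ is a group homomorphism.

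To establish bijectivity I would exhibit an explicit inverse $\psi : \mathcal C_d \to \mathbb P_d$, which geometrically is the stereographic projection from $(-1,0)$:
\[\psi(x,y) = [x+1 : y] \ \text{for } (x,y) \neq (-1,0), \qquad \psi(-1,0) = [0:1].\]
The identity $x^2 - dy^2 = 1$ gives $N_d(x+1, y) = (x+1)^2 - dy^2 = 2(x+1)$ and $(x+1)^2 + dy^2 = 2x(x+1)$, from which $\phi(\psi(x,y)) = (x,y)$ follows by a direct computation when $x \neq -1$; the exceptional point $(-1,0)$ is handled by the direct check $\phi([0:1]) = (d, 0)/(-d) = (-1,0)$. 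The reverse composition $\psi(\phi([m:n])) = [m:n]$ is analogous after substituting the explicit coordinates of $\phi([m:n])$.

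The main nuisance, which I would flag as the only real obstacle, is checking that the excluded sets on the two sides match when $d$ is a square. In that case $[m:n]$ lies in $\mathbb P_d$ exactly when $m \pm sn \neq 0$, i.e., when $N_d(m,n) \neq 0$, which is precisely what makes $\phi([m:n])$ defined; conversely, the candidate preimage $[x+1:y]$ fails to represent a point of $\mathbb P_d$ only when $(x+1)^2 = dy^2 = (x+1)(x-1)$, which forces $x=-1$ and (for $d \neq 0$) $y=0$, the single case rerouted to $[0:1]$. Once this bookkeeping is in place, the calculations above carry over verbatim to both the non-square and the square case.
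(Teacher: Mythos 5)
Your proof is correct and follows the same overall skeleton as the paper's (well-definedness, homomorphism, bijectivity realized by the stereographic inverse $[x+1:y]$ from $(-1,0)$), but two steps are executed differently, and the differences are worth noting. For the homomorphism property the paper simply invokes multiplicativity of $N_d$ and of squaring; your reformulation $\phi([m:n]) = u/\bar u$ inside $\mathcal R_d$ packages the same facts so that multiplicativity becomes a one-line consequence of conjugation being a ring automorphism --- cleaner, and it foreshadows why an analogous closed form is harder in the cubic case. For bijectivity the paper computes the kernel and then \emph{derives} the preimage of $(x,y)$ with $y \neq 0$ by solving a quadratic in $m$, which forces it to discard the spurious root $m_- = n(x-1)/y$ via the contradiction $-2 = 2$; you instead posit $\psi(x,y) = [x+1:y]$ outright and verify both compositions using the identities $N_d(x+1,y) = 2(x+1)$ and $(x+1)^2 + dy^2 = 2x(x+1)$. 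This avoids the square-root extraction and the root case analysis, at the price of having to check that $[x+1:y]$ genuinely lies in $\mathbb P_d$ when $d$ is a square --- the bookkeeping you correctly flag and carry out, and which the paper leaves implicit. Both your argument and the paper's silently assume $\operatorname{char}\mathbb F \neq 2$ and $d \neq 0$ (otherwise $2mn$ vanishes identically and $\phi$ cannot be surjective), so that restriction is a shared blind spot rather than a defect of your write-up.
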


\begin{proof} 
In order for $\phi$ to be a group isomorphism, it must be:
\begin{itemize}
\item well defined:
for any $[m : n] \in \mathbb{P}_d$, $\lambda \in \mathbb{F}^\times$
\begin{align*}
  \phi([\lambda m : \lambda n]) 
  = \frac{\lambda^2 (m, n)^{\otimes_d 2}}{\lambda^2 N_d(m, n)} 
  = \phi([m : n]),
\end{align*}
and $\phi(\mathbb{P}_d) \subseteq \mathcal{C}_d$ since for any $[m : n] \in \mathbb{P}_d$
\begin{align*}
  N_d(\phi([m : n])) 
  = \frac{N_d(m, n)^2}{N_d(m, n)^2} 
  = 1;
\end{align*}

\item a group homomorphism:
for any $[m_1 : n_1], [m_2 : n_2] \in \mathbb{P}_d$,
\begin{align*}
  \phi([m_1 : n_1] \otimes_d [m_2 : n_2]) 
  & = \frac{([m_1 : n_1] \otimes_d [m_2 : n_2])^{\otimes_d 2}}{N_d([m_1 : n_1] \otimes_d [m_2 : n_2])} \\
  & = \frac{(m_1, n_1)^{\otimes_d 2} \otimes_d (m_2, n_2)^{\otimes_d 2}}{N_d(m_1, n_1) N_d(m_2, n_2)} \\
  & = \phi([m_1 : n_1]) \otimes_d \phi([m_2 : n_2]);
\end{align*}

\item injective:
for any $[m, n] \in \mathbb{P}_d$,
\begin{align*}
  \phi([m : n]) = (1, 0) 
  & \Leftrightarrow \begin{cases}
  m^2 - d n^2 = m^2 + d n^2, \\
  0 = 2 m n \end{cases} \\
  & \Leftrightarrow n = 0 
  \Leftrightarrow \text{ker}(\phi) = \{[1 : 0]\};
\end{align*}

\item surjective:
if $(x, 0) \in \mathcal{C}_d$, then $0 = 2 m n$ so that
\begin{align*}
  m = 0 \Rightarrow x = \frac{d n^2}{- d n^2} = - 1,
  \text{ or }
  n = 0 \Rightarrow x = \frac{m^2}{m^2} = 1,
\end{align*}
i.e., $\phi([0 : 1]) = (- 1, 0)$ and $\phi([1 : 0]) = (1, 0)$, while when $y \neq 0$ then $d = \frac{x^2 - 1}{y^2}$ and
\begin{align*}
  \begin{cases}
  x = \frac{m^2 y^2 + (x^2 - 1) n^2}{m^2 y^2 - (x^2 - 1) n^2}, \\
  y = \frac{2 m n y^2}{m^2 y^2 - (x^2 - 1) n^2} \end{cases} 
  \Rightarrow \begin{cases}
  m^2 y^2 - 2 m n x y + n^2 (x^2 - 1) = 0, \\
  m^2 y^2 - n^2 (x^2 - 1) = 2 m n y \end{cases}
\end{align*}
\begin{align*}
  \Rightarrow m_\pm 
  = \frac{n x y \pm \sqrt{n^2 x^2 y^2 - (x^2 - 1) n^2 y^2}}{y^2} 
  = \frac{n x y \pm n y}{y^2} = n \frac{x \pm 1}{y},
\end{align*}
but $m_-$ in the second equation returns
\begin{align*}
  n^2 \frac{(x - 1)^2}{y^2} y^2 - n^2 (x - 1) (x + 1) 
  = 2 n^2 \frac{x - 1}{y} y
  & \Rightarrow (x - 1) - (x + 1) = 2 \\
  & \Rightarrow - 2 = 2,
\end{align*}
thus, $m_+$ is the only option and $\phi([x + 1 : y]) = (x, y)$.
\end{itemize}
In conclusion, $\phi$ is a group isomorphism.
\end{proof}

In this proof we also constructed the inverse of $\phi$ that is
\begin{align*}
  \phi^{-1} : \big(\mathcal{C}_d, \otimes_d\big) 
  & \xrightarrow{\;\sim\;} \big(\mathbb{P}_d, \otimes_d\big), \\
  (-1, 0) 
  & \longmapsto [0 : 1], \\
  (x, y) 
  & \longmapsto [x + 1 : y].
\end{align*}
From a geometrical point of view, when taking the canonical representative of the image, the first entry results in the slope of the line through $(-1, 0)$ and $(x, y)$ evaluated with $x$ depending on $y$, except for the image of $(1, 0)$ which is the point at infinity $\alpha = [1 : 0]$.

The group isomorphism $\phi$ gives also a direct method to generate all the solutions of the Pell equation $x^2 - d y^2 = 1 \in \mathbb{F}$ from the elements of $\mathbb{P}_d$, which requires half the size to be stored, since
\begin{align*}
  \phi(\alpha) & = (1, 0), \\ 
  \phi(0) & = (- 1, 0), \\
  \phi(m) & = \left(\frac{m^2 + d}{m^2 - d}, \frac{2 m}{m^2 - d}\right), \quad \text{for } m \neq \alpha, 0.
\end{align*}

\section{The Pell conic over finite fields}
\label{sec:ffcon}

When $\mathbb{F} = \mathbb{F}_q$ with $q = p^k$ and $p$ prime, the group structure of the Pell conic depends on the parameter $d$ being or not a square.
These situations are fully described by Menezes and Vanstone \cite{MV92} giving also the order of the Pell conic in these two cases, i.e., the number of solutions of the Pell equation over finite fields.
In \cref{ssec:dnres,ssec:dres}, we report these results including the proofs and proving them also in an alternative way connected to the previous parameterization.
In this way, we provide the ideas that will be exploited for studying the cubic Pell equation over finite fields in \cref{sec:ffcub}.

\subsection{$d$ non--square}
\label{ssec:dnres}

When $d$ is a not a square, $t^2 - d \in \mathbb{F}_q[t]$ is irreducible over $\mathbb{F}_q$, so that
\begin{align*}
  \mathcal{R}_d = \mathbb{F}_q[t] / \langle t^2 - d \rangle \cong \mathbb{F}_{q^2}.
\end{align*}

\begin{theo}
\label{th:mv1}
If $d$ is a non--square in $\mathbb{F}_q$, then $(\mathcal{C}_d, \otimes_d)$ is a cyclic group of order $q + 1$ \cite{MV92}.
\end{theo}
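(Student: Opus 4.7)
The plan is to realize $\mathcal{C}_d$ explicitly as the subgroup of $(q+1)$-th roots of unity inside $\mathbb{F}_{q^2}^\times$, from which both the order and the cyclicity drop out at once.

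First, since $d$ is a non-square, $t^2 - d$ is irreducible in $\mathbb{F}_q[t]$, so $\mathcal{R}_d \cong \mathbb{F}_{q^2}$, as already noted just before the statement. Combined with the isomorphism $\mathcal{U}(\mathcal{R}_d) \cong \mathcal{C}_d$ built into the definition of the Pell conic, this reduces the problem to analysing the norm-one subgroup of $\mathbb{F}_{q^2}^\times$.

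Next, I would identify the conjugation $x + yt \mapsto x - yt$ on $\mathcal{R}_d$ with the Frobenius automorphism $\sigma : z \mapsto z^q$ generating $\mathrm{Gal}(\mathbb{F}_{q^2}/\mathbb{F}_q)$. Both are $\mathbb{F}_q$-algebra automorphisms of order $2$ that send $t$ to the other root of $t^2 - d$, so they must coincide. Consequently $N_d(z) = z \cdot \sigma(z) = z^{q+1}$, and therefore
\begin{equation*}
  \mathcal{U}(\mathcal{R}_d)
  = \{z \in \mathbb{F}_{q^2}^\times : z^{q+1} = 1\}.
\end{equation*}
This is the subgroup of $(q+1)$-th roots of unity in the cyclic group $\mathbb{F}_{q^2}^\times$ of order $q^2 - 1 = (q-1)(q+1)$. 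Any subgroup of a cyclic group is cyclic, and the group of $(q+1)$-th roots of unity has exactly $q+1$ elements, giving both the order and the cyclicity.

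As the alternative derivation advertised in the introduction to \cref{sec:ffcon}, one can cross-check the order via the parameterization: \cref{th:conphi} gives $\mathcal{C}_d \cong \mathbb{P}_d$ as groups, and the explicit description in~\eqref{eq:conproj} reads $|\mathbb{P}_d| = |\mathbb{F}_q| + 1 = q + 1$. There is no genuine obstacle in the argument; the only step with any content is the identification of the algebraic conjugation with the Frobenius, which is a one-line uniqueness remark, after which the structure of $\mathbb{F}_{q^2}^\times$ does all the work.
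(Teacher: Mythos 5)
Your proposal is correct and follows essentially the same route as the paper: both arguments realize the norm-one elements of $\mathcal{R}_d \cong \mathbb{F}_{q^2}$ as the unique subgroup of order $q+1$ of the cyclic group $\mathbb{F}_{q^2}^\times$, i.e.\ the kernel of $z \mapsto z^{q+1}$. The only cosmetic difference is how the identity $N_d(z) = z^{q+1}$ is justified: the paper computes $(x+yt)^q = x + y\,d^{(q-1)/2}t = x - yt$ explicitly via Euler's criterion, whereas you identify the conjugation with the Frobenius by the uniqueness of the nontrivial $\mathbb{F}_q$-automorphism of $\mathbb{F}_{q^2}$ — both are valid one-line verifications of the same key step.
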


\begin{proof}
We have that $\mathcal{R}_d^{\otimes_d} \cong \mathbb{F}_{q^2}^\times$ has $q^2 - 1$ elements.
If $G \subset \mathbb{F}_{q^2}^\times$ denotes the multiplicative subgroup of order $q + 1$, then $x + y t \in G \Leftrightarrow (x + y t)^{q + 1} = 1$ and
\begin{align*}
  (x + y t)^{q + 1} 
  & = (x + y t)^q (x + y t) \\
  & = (x + y t^q) (x + y t) \\
  & = \big(x + y (t^2)^{(q - 1)/2} t \big) (x + y t) \\
  & = \big(x + y d^{(q - 1)/2} t\big) (x + y t) \\
  & = (x - y t) (x + y t) \\
  & = x^2 - d y^2,
\end{align*}
so that $x + y t \in G \Leftrightarrow (x, y) \in \mathcal{C}_d$.
This association is a group isomorphism between $G$ and $(\mathcal{C}_d, \otimes_d)$, hence the Pell conic is a cyclic group of order $q + 1$.
\end{proof}

Looking at the projectivization $\mathbb{P}_d$, since there are no square roots of $d$ in $\mathbb{F}_q$, then $\#\mathbb{P}_d = q + 1$ from \cref{eq:conproj}.
This is confirmed also considering
\begin{align*}
  \big(\mathbb{P}_d, \otimes_d\big) 
  \cong \mathcal{R}_d^{\otimes_d} / \mathbb{F}_q^\times 
  \cong \mathbb{F}_{q^2}^\times / \mathbb{F}_q^\times,
\end{align*}
which proves also that $(\mathbb{P}_d, \otimes_d)$ is cyclic because quotient of cyclic groups.
Thus, using the group isomorphism $\phi$ obtained for a general field in \cref{th:conphi} also proves that $(\mathcal{C}_d, \otimes_d)$ is cyclic of order $q + 1$.
In addition, $\phi$ allows also to describe each point of the conic with half the size with respect to the group isomorphism obtained in \cref{th:mv1}.

\subsection{$d$ square}
\label{ssec:dres}

If we suppose $d$ is a square, fixed a square root $s$ of $d$, then $\pm s \in \mathbb{F}_q$ and  $\mathcal{R}_d$ is a ring.
As in the previous case, the Pell conic is cyclic because of the following result.

\begin{theo}
\label{th:mv2}
If $d$ is a square in $\mathbb{F}_q$, then $(\mathcal{C}_d, \otimes_d)$ is a cyclic group of order $q - 1$ \cite{MV92}.
\end{theo}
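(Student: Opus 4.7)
The plan is to mirror the strategy of \cref{th:mv1}, where the order and cyclicity of the Pell conic were read off directly from the structure of the ambient ring $\mathcal{R}_d$. The new ingredient is that, with $d = s^2$ a square, the polynomial $t^2 - d$ factors as $(t - s)(t + s)$ into two distinct linear factors (the parameterization in \cref{eq:conproj} tacitly assumes $s \neq - s$, i.e.\ characteristic different from $2$), so the Chinese Remainder Theorem yields an explicit ring isomorphism
\[
  \psi : \mathcal{R}_d \;\xrightarrow{\;\sim\;}\; \mathbb{F}_q[t]/\langle t - s\rangle \times \mathbb{F}_q[t]/\langle t + s\rangle \;\cong\; \mathbb{F}_q \times \mathbb{F}_q, \qquad x + y t \longmapsto (x + y s,\, x - y s).
\]

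Two observations then close the argument. First, the norm factors as $N_d(x + y t) = x^2 - d y^2 = (x + y s)(x - y s)$, so $\psi$ carries $N_d$ to the componentwise product $(a, b) \mapsto a b$. Second, the non-invertible elements $\pm s y + y t$ of $\mathcal{R}_d$ listed in \cref{sec:qua} are precisely those whose image under $\psi$ has a vanishing coordinate, so $\mathcal{R}_d^{\otimes_d}$ is carried onto $\mathbb{F}_q^\times \times \mathbb{F}_q^\times$. Combining these, $\mathcal{U}(\mathcal{R}_d)$ is sent isomorphically onto
\[
  \{(a, b) \in \mathbb{F}_q^\times \times \mathbb{F}_q^\times \,|\, a b = 1\} \;=\; \{(a, a^{-1}) \,|\, a \in \mathbb{F}_q^\times\},
\]
which projects isomorphically onto $\mathbb{F}_q^\times$ via the first coordinate. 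Transporting through the group isomorphism $\mathcal{U}(\mathcal{R}_d) \cong (\mathcal{C}_d, \otimes_d)$ recalled in \cref{sec:qua}, the Pell conic is cyclic of order $q - 1$.

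As a consistency check, the parameterization gives the same count: \cref{eq:conproj} yields $\#\mathbb{P}_d = (q - 2) + 1 = q - 1$, and the chain
\[
  (\mathbb{P}_d, \otimes_d) \;\cong\; \mathcal{R}_d^{\otimes_d}/\mathbb{F}_q^\times \;\cong\; (\mathbb{F}_q^\times \times \mathbb{F}_q^\times)/\mathbb{F}_q^\times \;\cong\; \mathbb{F}_q^\times,
\]
with the middle $\mathbb{F}_q^\times$ embedded diagonally, already exhibits $\mathbb{P}_d$ as a quotient of cyclic groups; applying $\phi$ from \cref{th:conphi} transfers this conclusion to $\mathcal{C}_d$. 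The only step I would pause on is the explicit identification of the non-units of $\mathcal{R}_d$ with the pairs $(a, b)$ having a zero coordinate, since the rest of the argument is automatic from the CRT decomposition; this identification is a direct computation relying on $s \neq - s$.
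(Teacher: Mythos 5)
Your proof is correct and follows essentially the same route as the paper's: the paper likewise factors the norm as $(x - s y)(x + s y) = u v$ and bijects $\mathcal{C}_d$ with $\{(u,v) \mid u v = 1\} \cong \mathbb{F}_q^\times$ via $(x, y) \mapsto x - s y$, and your CRT packaging of that factorization is exactly the device the paper itself deploys in \cref{th:conpr} for $\mathbb{P}_d$. Your caveat that the identification tacitly needs $s \neq -s$ (odd characteristic) is fair and applies equally to the paper's explicit inverse, which divides by $2$ and $2s$.
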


\begin{proof}
Fixed a square root $s \in \mathbb{F}_q$ of $d$, the norm of a point $(x, y) \in \mathcal{C}_d$ can be factorized as
\begin{align*}
  1 = x^2 - d y^2 
  = (x - s y) (x + s y) 
  = u v,
\end{align*}
so that
\begin{align*}
  x = \frac{v + u}{2}, \quad y 
  = \frac{v - u}{2 s},
\end{align*}
which results in a bijective correspondence between $(x, y) \in \mathcal{C}_d$ and $(u, v) \in \mathbb{F}_q^2$ such that $u v = 1$.
This equation has exactly $q - 1$ solutions in $\mathbb{F}_q^2$ and, in particular, a unique solution for each $u \in \mathbb{F}_q^\times$.
Thus, 
\begin{align*}
  \big(\mathcal{C}_d, \otimes_d\big) 
  & \;\cong\; \mathbb{F}^\times \\
  (x, y) 
  & \longmapsto x - s y, \\
  \left(\frac{1 + u^2}{2 u}, \frac{1 - u^2}{2 s u}\right) 
  & \longmapsfrom u,
\end{align*} 
is bijective and a group homomorphism, i.e., $(\mathcal{C}_d, \otimes_d)$ is cyclic of order $q - 1$.
\end{proof}

When considering the projectivization $\mathbb{P}_d$, from \cref{eq:conproj}, $\#\mathbb{P}_d = q - 1$.
This is confirmed by the following result.

\begin{theo}
\label{th:conpr}
If $d$ is a square in $\mathbb{F}_q$, then $(\mathbb{P}_d, \odot_d)$ is a cyclic group of order $q - 1$.
\end{theo}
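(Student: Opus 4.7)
The plan is to imitate the quotient argument sketched after \cref{th:mv1} for the non-square case, but with an extra preliminary step: since $d$ is now a square, $\mathcal{R}_d$ is not a field and its structure must be unpacked via the Chinese Remainder Theorem before the quotient by $\mathbb{F}_q^\times$ can be computed. Assuming $p\neq 2$ (so that $s\neq -s$, as is already implicitly required for the formulas in \cref{th:mv2}), the factorization $t^2-d=(t-s)(t+s)$ into coprime irreducibles yields a ring isomorphism
\begin{equation*}
    \psi:\mathcal{R}_d \xrightarrow{\;\sim\;} \mathbb{F}_q\times\mathbb{F}_q,\qquad x+yt \longmapsto (x+sy,\;x-sy).
\end{equation*}
A short direct check (using \cref{eq:brah}) shows that $\psi$ sends the Brahmagupta product to coordinatewise multiplication, and the scalar action of $\lambda\in\mathbb{F}_q^\times$ on $\mathcal{R}_d$ is transported into the diagonal action $(a,b)\mapsto(\lambda a,\lambda b)$.

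Having this dictionary, the $\otimes_d$-invertible elements described in the second bullet before \cref{def:dproj} correspond to the pairs with both entries nonzero, so $\mathcal{R}_d^{\otimes_d}\cong \mathbb{F}_q^\times\times\mathbb{F}_q^\times$. Passing to the projectivization amounts to modding out by the diagonal $\mathbb{F}_q^\times$, and the standard identification
\begin{equation*}
    (\mathbb{F}_q^\times\times\mathbb{F}_q^\times)/\Delta(\mathbb{F}_q^\times) \xrightarrow{\;\sim\;} \mathbb{F}_q^\times,\qquad [(a,b)]\longmapsto ab^{-1},
\end{equation*}
gives an explicit group isomorphism $(\mathbb{P}_d,\otimes_d)\cong(\mathbb{F}_q^\times,\cdot)$, which is cyclic of order $q-1$. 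In terms of canonical representatives $[m:1]$, the isomorphism reads $[m:1]\mapsto (m+s)/(m-s)$, with $[1:0]\leftrightarrow 1$, and the two points $\pm s$ removed in \cref{eq:conproj} are precisely those on which this fractional expression fails to define an element of $\mathbb{F}_q^\times$.

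The main point to watch for is that, unlike in the non-square case, $\mathcal{R}_d^{\otimes_d}$ is itself \emph{not} cyclic as soon as $q>2$, so cyclicity of $\mathbb{P}_d$ does not follow for free from the ``quotient of a cyclic group'' slogan used after \cref{th:mv1}; one genuinely needs the diagonal collapse of the two $\mathbb{F}_q^\times$ factors. As a cross-check, the conclusion also agrees with \cref{th:conphi} combined with \cref{th:mv2}, which gives an independent route $(\mathbb{P}_d,\otimes_d)\cong(\mathcal{C}_d,\otimes_d)\cong\mathbb{F}_q^\times$, but the CRT argument above is preferable here because it parallels the treatment of the non-square case and will be the template adapted in \cref{sec:ffcub} for the cubic Pell equation.
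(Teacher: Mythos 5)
Your proposal is correct and follows essentially the same route as the paper: factor $t^2-d=(t-s)(t+s)$, apply the Chinese Remainder Theorem to get $\mathcal{R}_d\cong\mathbb{F}_q\times\mathbb{F}_q$, and identify $\mathbb{P}_d$ with the quotient of $\mathbb{F}_q^\times\times\mathbb{F}_q^\times$ by the diagonal, which is $\mathbb{F}_q^\times$ (the paper uses the ratio $(m-sn)/(m+sn)$ where you use its reciprocal, an immaterial difference). Your side remarks --- that odd characteristic is implicitly needed for the two factors to be coprime, and that cyclicity here does not follow from the ``quotient of a cyclic group'' slogan used in the non-square case --- are both accurate and worth keeping.
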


\begin{proof}
Fixed $s$ square root of $d$ in $\mathbb{F}_q$, $t^2 - d$ is reducible over $\mathbb{F}_q$ as
\begin{align*}
  t^2 - d = (t - s) (t + s),
\end{align*}
so that, using the Chinese remainder theorem, there is the ring isomorphism
\begin{align*}
  \mathcal{R}_d = \mathbb{F}_q[t] / \langle t^2 - d \rangle
  & \xrightarrow{\;\sim\;} \mathbb{F}_q[t] / \langle t - s \rangle \times \mathbb{F}_q[t] / \langle t + s \rangle, \\
  x + y t
  & \longmapsto (x + s y, x - s y).
\end{align*}
In addition, $\mathbb{F}_q[t] / \langle t - s \rangle \cong \mathbb{F}_q[t] / \langle t + s \rangle \cong \mathbb{F}_q$, so that when passing to the quotients there is the group isomorphism
\begin{align*}
  \big(\mathbb{P}_d, \otimes_d\big)
  \cong \mathcal{R}_d^{\otimes_d} / \mathbb{F}_q^\times 
  & \;\cong\; (\mathbb{F}_q^\times \times \mathbb{F}_q^\times) / \mathbb{F}_q^\times
  \cong \mathbb{F}_q^\times, \\
  [m : n] = \{\lambda (m + n t) \,|\, \lambda \neq 0\}
  & \longmapsto \frac{m - s n}{m + s n}, \\
  \left[s (1 + u) : 1 - u\right]
  & \longmapsfrom u,
\end{align*}
which confirms that $\big(\mathbb{P}_d, \otimes_d\big)$ is a cyclic group of order $q - 1$.
\end{proof}

In particular, the combination of the group isomorphisms obtained in \cref{th:mv2,th:conpr} is
\begin{alignat*}{5}
  & \big(\mathbb{P}_d, \otimes_d\big) 
  && \xrightarrow{\;\sim\;} \mathbb{F}_q^\times 
  && \xrightarrow{\;\sim\;} \big(\mathcal{C}_d, \otimes_d\big), \\
  & [m : n] 
  && \longmapsto \frac{m - s n}{m + s n} 
  && \longmapsto \left(\frac{1 + \left(\frac{m - s n}{m + s n}\right)^2}{2 \frac{m - s n}{m + s n}},
  \frac{1 - \left(\frac{m - s n}{m + s n}\right)^2}{2 s \frac{m - s n}{m + s n}}\right) = (x, y),
\end{alignat*}
where
\begin{align*}
(x, y) 
& = \left(\frac{(m + s n)^2 + (m - s n)^2}{2 (m - s n) (m + s n)}, 
\frac{(m + s n)^2 - (m - s n)^2}{2 s (m - s n) (m + s n)}\right) \\
& = \left(\frac{2 m^2 + 2 d n^2}{2 (m^2 - d n^2)}, 
\frac{4 s m n}{2 s (m^2 - d n^2)}\right) 
= \left(\frac{m^2 + d n^2}{m^2 - d n^2}, \frac{2 m n}{m^2 - d n^2}\right).
\end{align*}
The inverse is given by 
\begin{alignat*}{5}
  & \big(\mathcal{C}_d, \otimes_d\big) 
  && \xrightarrow{\;\sim\;} \mathbb{F}_q^\times 
  && \xrightarrow{\;\sim\;} \big(\mathbb{P}_d, \otimes_d\big), \\
  & (x, y) 
  && \longmapsto x - s y 
  && \longmapsto [s (1 + x - s y) : 1 - x + s y] = [m : n],
\end{alignat*}
where if $1 + x + s y \neq 0$ then
\begin{align*}
  [m : n] 
  & = [s (1 + x - s y) (1 + x + s y) : (1 - x + s y) (1 + x + s y)] \\
  & = [s (1 + 2 x + x^2 - d y^2) : 1 + 2 s y + d y^2 - x^2] \\
  & = [2 s (1 + x) : 2 s y] = [1 + x : y],
\end{align*}
while 
\begin{align*}
  \begin{cases}
  1 + x + s y = 0, \\
  x^2 - d y^2 = 1 \end{cases}
  \Rightarrow
  \begin{cases}
  x = - 1 - s y, \\
  1 + 2 s y + d y^2 - d y^2 = 1 \end{cases}
  \Rightarrow
  \begin{cases}
  x = - 1, \\
  y = 0, \end{cases}
\end{align*}
so that
\begin{align*}
  [m : n] = \begin{cases}
  [0 : 1], & \text{if } (x, y) = (- 1, 0), \\
  [1 + x : y], & \text{otherwise.} \end{cases}
\end{align*}
These are exactly $\phi$ and $\phi^{-1}$ obtained for a general field in \cref{th:conphi}, which allow to describe each point of the conic with half the size with respect to the group isomorphism obtained in \cref{th:mv2}.

\section{The cubic Pell equation} 
\label{sec:cub}

In this section, we introduce and study the cubic Pell equation in a way similar to the one used in \cref{sec:qua} for the quadratic case.
Then we approach the study of the Pell cubic equation over finite fields in the next section.

Given a field $\mathbb{F}$ and an element $r \in \mathbb{F}$, we consider the polynomial ring
\begin{align*}
  \mathcal{R}_r := \mathbb{F}[t] / \langle t^3 - r \rangle,
\end{align*}
which inherits from the polynomial product the operation
\begin{align*}
  (x_1 + y_1 t + z_1 t^2) \cdot (x_2 + y_2 t + z_2 t^2) 
  = &\;x_1 x_2 + r (y_1 z _2 + z_1 y_2) \\
  & \quad + (x_1 y_2 + y_1 x_2 + r z_1 z_2) t \\
  & \qquad + (x_1 z_2 + y_1 y_2 + z_1 x_2) t^2.
\end{align*}

Considering the cubic roots of unity $\{1, \omega, \omega^2\}$, we can define the conjugate of an element $x + y t + z t^2 \in \mathcal{R}_r$ as 
\begin{align*}
  (x + y \omega t + z \omega^2 t^2) \cdot (x + y \omega^2 t + z \omega t^2)
  = &\;(x^2 - r y z) \\
  & \quad + (r z^2 - x y) t \\
  & \qquad + (y^2 - x z) t^2.
\end{align*}
The product of an element with its conjugate defines the norm
\begin{align*}
  N_r(x + y t + z t^2) 
  & := (x + y t + z t^2) \cdot (x + y \omega t + z \omega^2 t^2) \cdot (x + y \omega^2 t + z \omega t^2) \\
  & = x^3 - 3 r x y z + r y^3 + r^2 z^3,
\end{align*}
that, as for the Pell conic, allows to provide a trivial group isomorphism between the unitary elements of $\mathcal{R}_r$ with respect to the norm $N_r$
\begin{align*}
 \mathcal{U}(\mathcal{R}_r) := \{x + y t + z t^2 \in \mathcal{R}_r \,|\, N_r(x + y t + z t^2) = 1\},
\end{align*}
and the \emph{Pell cubic}
\begin{align*}
 \mathcal{C}_r := \{(x, y, z) \in \mathbb{F}^3 \,|\, x^3 - 3 r x y z + r y^3 + r^2 z^3 = 1\},
\end{align*}
that, with the generalization of the Brahmagupta product
\begin{align*}
    (x_1, y_1, z_1) \odot_r (x_2, y_2, z_2) 
  := \big(&x_1 x_2 + r (y_1 z _2 + z_1 y_2), \\
  & \quad x_1 y_2 + y_1 x_2 + r z_1 z_2, \\
  & \qquad x_1 z_2 + y_1 y_2 + z_1 x_2\big),
\end{align*}
is a commutative group with identity $(1, 0, 0)$ and inverse of an element $(x, y, z)$ given by is its conjugate
\begin{equation*}
  (x^2 - r y z, r z^2 - x y, y^2 - x z).
\end{equation*}
Due to this group isomorphism, in the following we will use $\odot_r$ also for denoting the product over $\mathcal{R}_r$, in order to highlight the dependence on $r$ for this product.

As for \cref{def:dproj}, we can consider the set of the invertible elements of $\mathcal{R}_r$ with respect to $\odot_r$, denoted as
\begin{equation*}
    \mathcal{R}_r^{\odot_r} = \{x + y t + z t^2 \in \mathcal{R}_r \,|\, N_r(x + y t + z t^2) \neq 0\},
\end{equation*}
as well as introduce a parameterization for the Pell cubic with a projectivization.

\begin{defi}
We define the projectivization of $\mathcal{R}_r^{\odot_r}$ as
\begin{align*}
  \mathbb{P}_r 
  := \big\{[l : m : n] = \{\lambda (l + m t + n t^2) \,|\, \lambda \in \mathbb{F}^\times\} \,|\, l + m t + n t^2 \in \mathcal{R}_r^{\odot_r}\big\}.
\end{align*}
With the product $\odot_r$, it is a commutative group with identity $[1 : 0 : 0]$ and inverse of $[l : m : n]$ given by $[l^2 - r m n : r n^2 - l m : m^2 - l n]$.
\end{defi}

Differently from the quadratic case, the group isomorphism between  $(\mathbb{P}_r, \odot_r)$ and $(\mathcal{C}_r, \odot_r)$ is not easy to find.
However, it is still possible to exploit the projectivization to give a complete characterization of the Pell cubic over finite fields generalizing the results in \cref{sec:ffcon}, considering three cases:
\begin{enumerate}
\item if $r$ is not a cube in $\mathbb{F}$, then $N_r(x + y t + z t^2) \neq 0 \Leftrightarrow x + y t + z t^2 \neq 0$ and
\begin{equation}
\label{eq:cubproj1}
\begin{aligned}
  \mathbb{P}_r
  & = \big\{[l : m : 1], [l : 1 : 0], [1 : 0 : 0] \,|\, l, m \in \mathbb{F}\big\} \\
  & \cong (\mathbb{F} \times \mathbb{F}) \cup (\mathbb{F} \times \{\alpha\}) \cup \{(\alpha, \alpha)\},
\end{aligned}
\end{equation}
where $(\alpha, \alpha)$ denotes the point at infinity and $\mathbb{F} \times \{\alpha\}$ is a line at infinity;

\item if $r$ is a cube and $\{1, \omega, \omega^2\} \subset \mathbb{F}$, then $\mathbb{F}$ contains also all the cubic roots of $r$ that, when denoting one of them with $s$, are $\{s, s \omega, s \omega^2\}$.
In this case, $t^3 - r$ can be completely decomposed and the set of elements of norm zero is generated by three elements, i.e.,
\begin{align*}
  \big\{x + y t + z t^2 \in \mathcal{R}_r \,|\, N_r(x + y t + z t^2) = 0\big\} 
  & = \big\langle t - s, t - s \omega, t - s \omega^2 \big\rangle.
\end{align*}

Thus, $\big\langle [- s : 1 : 0], [- s \omega : 1 : 0], [- s \omega^2 : 1 : 0] \big\rangle \not\subset \mathbb{P}_r$ and this can help in obtaining an explicit form for its elements, like in \cref{eq:cubproj1}.

The elements generated by $[- s : 1 : 0]$ are, for any $l \in \mathbb{F}$,
\begin{align*}
  [- s : 1 : 0] \odot_r [l : 1 : 0] = [- l s : - s + l : 1],
\end{align*}
and, for any $l', m' \in \mathbb{F}$,
\begin{align*}
  [- s : 1 : 0] \odot_r [l' : m' : 1] 
  & = [- l' s + s^3 : - m' s + l' : - s + m'] \\
  & = \begin{cases}
  [- s (l' - s^2) : l' - s^2 : 0], & \text{if } m' = s, \\
  \left[- \left(\frac{l' - s^2}{m' - s}\right) s : \left(\frac{l' - s^2}{m' - s}\right) - s : 1\right], & \text{otherwise} \end{cases} \\
  & = \begin{cases}
  [- s : 1 : 0], & \text{if } m' = s, \\
  \left[- l s : l - s : 1\right], & \text{with } l = \frac{l' - s^2}{m' - s} \text{ otherwise.} \end{cases}
\end{align*}
We obtain analogous results with the second and third generators, with $s \omega$ and $s \omega^2$ instead of $s$, respectively.

Looking at the intersections between the three sets generated, we observe that if $0 \leq i < j \leq 2$, then $[- l s \omega^i : l - s \omega^i : 1] = [- l' s \omega^j : l' - s \omega^j : 1]$ if and only if
\begin{align*}
  \begin{cases}
  - l s \omega^i = - l' s \omega^j, \\
  l - s \omega^i = l' - s \omega^j \end{cases}
  \Leftrightarrow \begin{cases}
  l = l' \omega^{j - i}, \\
  l' \omega^{j - i} - l' = s \omega^i - s \omega^j \end{cases}
  \Leftrightarrow \begin{cases}
  l = - s \omega^j, \\
  l' = - s \omega^i, \end{cases}
\end{align*}
which means that 
\begin{align*}
  \big\langle (- s \omega^i, 1, 0) \big\rangle \cap\big\langle (- s \omega^j, 1, 0) \big\rangle 
  & = \big\{[s^2 \omega^{i + j} : - s (\omega^i + \omega^j) : 1]\big\} \\
  & = \big\{[s^2 \omega^{i + j} : s \omega^k : 1],\, k \neq i, j\big\}.
\end{align*}
Thus, when considering the union of the three sets obtained by the generators, three of all the elements of norm zero are obtained twice.
In particular, in $\langle[- s \omega^i : 1 : 0]\rangle$, they are those with second coordinate $m = s \omega^k$ with $k \neq i$.
Hence, one of the duplicates can be removed by excluding for each $i \in \{0, 1, 2\}$ the element with $m = s \omega^{i - 1}$, so that
\begin{align}
\label{eq:cubproj2}
  \mathbb{P}_r =\;
  & \big\{[l : m : 1], [l : 1 : 0], [1 : 0 : 0] \,|\, l, m \in \mathbb{F}\big\} \\
  & \smallsetminus\!\bigcup_{i \in \{0, 1, 2\}}\!\big\{[- s \omega^i\!:\!1\!:\!0], [- (m\!+\!s \omega^i) s \omega^i\!:\!m\!:\!1] \,|\, m \in \mathbb{F}\!\smallsetminus\!\{s \omega^{i - 1}\}\big\}.
  \nonumber
\end{align}

\item if $r$ is a cube and $\mathbb{F}$ does not contain any non--trivial cubic root of unity, i.e., $\{\omega, \omega^2\} \not\subset \mathbb{F}$, then only one root $s$ of $r$ is in $\mathbb{F}$, so that
\begin{align*}
  t^3 - r 
  = (t - s) \odot_r (t + s \omega) \odot_r (t + s \omega^2) 
  = (t - s) \odot_r (t^2 + s t + s^2),
\end{align*}
and the elements of norm zero are
\begin{align*}
  \big\{x + y t + z t^2 \in \mathcal{R}_r \,|\, N_r(x + y t + z t^2) = 0\big\} 
  & = \big\langle t - s, t^2 + s t + s^2 \big\rangle_{\odot_r} \\
  & \cong \big\langle (- s, 1, 0), (s^2, s, 1) \big\rangle_{\odot_r}.
\end{align*}
As before, when considering the projectivization, $[- s : 1 : 0]$ generates the elements $[- (m + s) s : m : 1]$ for $m \in \mathbb{F}$.
With the second generator no other element is added since 
\begin{align*}
  [s^2 : s : 1] \odot_r [- s : 1 : 0] = [0 : 0 : 0],
\end{align*}
while for every $[l : m : n]$ non multiple of $[- s : 1 : 0]$
\begin{align*}
  [s^2 : s : 1] \odot_r [l : m : n] 
  & = [s^2 (l\!+\!m s\!+\!n s^2) : s (l\!+\!m s\!+\!n s^2) : l\!+\!m s\!+\!n s^2] \\
  & = [s^2 : s : 1].
\end{align*}
In conclusion
\begin{equation}
\label{eq:cubproj3}
\begin{aligned}
  \mathbb{P}_r =\;
  & \big\{[l : m : 1], [l : 1 : 0], [1 : 0 : 0] \,|\, l, m \in \mathbb{F}\big\} \\
  & \smallsetminus \big\{[- s : 1 : 0], [- (m + s) s : m : 1], [s^2 : s : 1] \,|\, m \in \mathbb{F}\big\}.
\end{aligned}
\end{equation}
\end{enumerate} 

\section{The Pell cubic over finite fields}
\label{sec:ffcub}

In this section, we give a full description of the solutions of the cubic Pell equation when $\mathbb{F} = \mathbb{F}_q$ with $q = p^k$ and $p$ prime.
This characterization depends on the parameter $r \in \mathbb{F}_q$ and there are three different scenarios due to the value of $\gcd(3, q - 1)$ in the generalization of the Euler criterion:
\begin{align*}
  r \in \mathbb{F}_q \text{ is a cube}
  \Leftrightarrow r^{(q - 1) / \gcd(3, q - 1)} = 1.
\end{align*}

\subsection{$r$ non--cube}
\label{ssec:rnres}

From the Euler criterion, a finite field $\mathbb{F}_q$ contains a non--cube element $r$ if and only if $\gcd(3, q - 1) > 1 \Leftrightarrow q \equiv 1 \pmod{3}$, so that $(q - 1) / 3 = \lfloor q / 3 \rfloor$ and
\begin{align*}
  \begin{cases}
  r^{(q - 1) / 3} \neq 1, \\
  r^{q - 1} = 1 \end{cases}
  \Leftrightarrow r^{\lfloor q / 3 \rfloor} = \omega,
  \text{ primitive cubic root of unity.}
\end{align*}

In this case, the polynomial $t^3 - r$ is irreducible over $\mathbb{F}_q$, so that
\begin{align*}
  \mathcal{R}_r = \mathbb{F}_q[t] / \langle t^3 - r \rangle \cong \mathbb{F}_{q^3}.
\end{align*}
We can obtain a result analogous to \cref{th:mv1}.

\begin{theo}
If $r$ is a non--cube in $\mathbb{F}_q$, then $(\mathcal{C}_r, \odot_r)$ is a cyclic group of order $q^2 + q + 1$.
\end{theo}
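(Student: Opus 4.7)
The plan is to mirror \cref{th:mv1}: realize $\mathcal{R}_r$ as a finite field and identify the norm-one subgroup with a distinguished cyclic subgroup of that field's multiplicative group.

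First I would observe that, since $r$ is not a cube in $\mathbb{F}_q$, the polynomial $t^3 - r$ has no root in $\mathbb{F}_q$ and, being of degree three, is therefore irreducible. Hence $\mathcal{R}_r = \mathbb{F}_q[t]/\langle t^3 - r\rangle \cong \mathbb{F}_{q^3}$ and $\mathcal{R}_r^{\odot_r} \cong \mathbb{F}_{q^3}^\times$ has order $q^3 - 1 = (q-1)(q^2+q+1)$. Let $G \subset \mathbb{F}_{q^3}^\times$ denote the unique subgroup of order $q^2+q+1$; it is automatically cyclic since $\mathbb{F}_{q^3}^\times$ is.

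The core of the argument is the identity $\xi^{q^2+q+1} = N_r(\xi)$ for all $\xi \in \mathcal{R}_r$, which turns membership in $G$ into the norm-one condition. The key step is determining how the Frobenius $\sigma:\xi\mapsto \xi^q$ acts on $t$. As recalled just before the statement, in the non-cube case $q \equiv 1 \pmod 3$ and $\omega := r^{(q-1)/3} \in \mathbb{F}_q$ is a primitive cube root of unity. Using $t^3 = r$,
\begin{equation*}
t^q = t \cdot (t^3)^{(q-1)/3} = r^{(q-1)/3}\, t = \omega\, t,
\end{equation*}
and since $\omega \in \mathbb{F}_q$ one gets $t^{q^2} = (\omega t)^q = \omega^2 t$. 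Because $\sigma$ is an $\mathbb{F}_q$-algebra homomorphism, for $\xi = x + y t + z t^2$ this yields
\begin{equation*}
\xi^q = x + y\omega\, t + z \omega^2 t^2, \qquad \xi^{q^2} = x + y \omega^2 t + z \omega\, t^2,
\end{equation*}
which are exactly the two conjugate factors used in \cref{sec:cub} to define $N_r$. Multiplying them together with $\xi$ gives $\xi^{q^2+q+1} = \xi \cdot \xi^q \cdot \xi^{q^2} = N_r(\xi)$, so $\xi \in G$ if and only if $N_r(\xi) = 1$, i.e.\ if and only if the corresponding triple lies in $\mathcal{C}_r$.

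This correspondence is a group isomorphism because the product on $\mathbb{F}_{q^3}$ coincides with $\odot_r$ on $\mathcal{R}_r$, which in turn restricts on norm-one elements to the cubic Brahmagupta product on $\mathcal{C}_r$. Hence $(\mathcal{C}_r,\odot_r) \cong G$ is cyclic of order $q^2+q+1$. The main obstacle is really just the Frobenius computation $t^q = \omega t$; once it is available, everything else is formal.
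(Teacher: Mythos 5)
Your proposal is correct and follows essentially the same route as the paper: identify $\mathcal{R}_r \cong \mathbb{F}_{q^3}$, compute $t^q = \omega t$ via the Euler criterion, and recognize $\xi\cdot\xi^q\cdot\xi^{q^2} = N_r(\xi)$ so that the norm-one elements form the unique (cyclic) subgroup of order $q^2+q+1$ in $\mathbb{F}_{q^3}^\times$. The only cosmetic difference is that you package the conjugates as Frobenius iterates $\xi^q,\xi^{q^2}$ directly, whereas the paper expands the powers termwise; the content is identical.
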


\begin{proof}
We clearly have that $\mathcal{R}_r^{\odot_r} \cong \mathbb{F}_{q^3}^\times$ has $q^3 - 1$ elements.
If $G \subset \mathbb{F}_{q^3}^\times$ denotes the multiplicative subgroup of order $q^2 + q + 1$, then $x + y t + z t^2 \in G$ if and only if $(x + y t + z t^2)^{q^2 + q + 1} = 1$ and 
\begin{align*}
  (x + y t + z t^2)^{q^2 + q + 1} 
  & = (x + y t + z t^2)^{q^2} (x + y t + z t^2)^q (x + y t + z t^2) \\
  & = (x + y t^q + z t^{2 q})^q (x + y t^q + z t^{2 q}) (x + y t + z t^2),
\end{align*}
where
\begin{align*}
  t^q = (t^3)^{(q - 1)/3} t 
  = r^{\lfloor q/3 \rfloor} t 
  = \omega t, 
  \quad \omega^q 
  = (\omega^3)^{(q - 1)/3} \omega = \omega,
\end{align*}
so that
\begin{align*}
  (x + y t + z t^2)^{q^2 + q + 1} 
  & = (x + y \omega t + z \omega^2 t^2)^q (x + y \omega t + z \omega^2 t^2) (x + y t + z t^2) \\
  & = (x + y \omega^q t^q + z \omega^{2 q} t^{2 q}) (x + y \omega t + z \omega^2 t^2) (x + y t + z t^2) \\
  & = (x + y \omega^2 t + z \omega t^q) (x + y \omega t + z \omega^2 t^2) (x + y t + z t^2) \\
  & = x^3 - 3 r x y z + r y^3 + r^2 z^3.
\end{align*}
Thus, $x + y t + z t^2 \in G \Leftrightarrow (x, y, z) \in \mathcal{C}_r$.
This association is a group isomorphism between $G$ and $(\mathcal{C}_r, \odot_r)$, hence the Pell cubic is cyclic with order $q^2 + q + 1$.
\end{proof}

Looking at the projectivization $\mathbb{P}_r$, since there are no cubic roots of $r$ in $\mathbb{F}_q$, then $\#\mathbb{P}_r = q^2 + q + 1$  from \cref{eq:cubproj1}.
This is obtained also considering that
\begin{align*}
  \big(\mathbb{P}_r, \odot_r\big)
  \cong \mathcal{R}_r^{\odot_r} / \mathbb{F}_q^\times
  \cong \mathbb{F}_{q^3}^\times/\mathbb{F}_q^\times,
\end{align*}
which proves also that $(\mathbb{P}_r, \odot_r)$ is cyclic because quotient of cyclic groups.
In addition, it is possible to obtain the following result.

\begin{theo}
If $q \equiv 1 \pmod{3}$ and $r \in \mathbb{F}_q^\times$ is a non--cube, then there is the group isomorphism
\begin{align*}
  \psi_1 : \big(\mathbb{P}_r, \odot_r\big) 
  & \xrightarrow{\;\sim\;} \big(\mathcal{C}_r, \odot_r\big), \\
  [l : m : n] 
  & \longmapsto N_r(l, m, n)^{\lfloor q / 3 \rfloor - 1} (l, m, n)^{\odot_r 3}.
\end{align*}
\end{theo}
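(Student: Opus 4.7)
The plan is to verify that $\psi_1$ is (i) well defined on projective classes, (ii) has image in $\mathcal{C}_r$, (iii) is a group homomorphism, and (iv) is bijective. Items (i)--(iii) are routine calculations resting on the multiplicativity of $N_r$ and of the cubing map on the commutative ring $\mathcal{R}_r\cong\mathbb{F}_{q^3}$, together with the identity $3\lfloor q/3\rfloor=q-1$ valid for $q\equiv 1\pmod{3}$: replacing $(l,m,n)$ by $\lambda(l,m,n)$ with $\lambda\in\mathbb{F}_q^\times$ scales the output by $\lambda^{3(\lfloor q/3\rfloor-1)+3}=\lambda^{q-1}=1$ (well-definedness), the norm of the image equals $N_r(l,m,n)^{3\lfloor q/3\rfloor}=N_r(l,m,n)^{q-1}=1$ (image in $\mathcal{C}_r$), and the homomorphism property follows because both $N_r(\cdot)^{\lfloor q/3\rfloor-1}$ and $(\cdot)^{\odot_r 3}$ are multiplicative.

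For bijectivity I would use the fact, already observed in this section, that both $(\mathbb{P}_r,\odot_r)\cong\mathbb{F}_{q^3}^\times/\mathbb{F}_q^\times$ and $(\mathcal{C}_r,\odot_r)$ are cyclic of the same order $q^2+q+1$, so injectivity alone will suffice. Writing $\alpha=l+mt+nt^2\in\mathbb{F}_{q^3}^\times$ and $k=\lfloor q/3\rfloor=(q-1)/3$, the kernel condition $\psi_1([l:m:n])=(1,0,0)$ reduces to
\[
  \alpha^{3}=N_r(\alpha)^{1-k}\in\mathbb{F}_q^\times.
\]
The key small lemma is then the assertion that \emph{any $\alpha\in\mathbb{F}_{q^3}^\times$ with $\alpha^3\in\mathbb{F}_q^\times$ satisfies $N_r(\alpha)=\alpha^3$}: since $\alpha^{3(q-1)}=1$, the element $\zeta:=\alpha^{q-1}$ is a cube root of unity, and the hypothesis $q\equiv 1\pmod 3$ places $\zeta$ already in $\mathbb{F}_q$, whence $\alpha^q=\zeta\alpha$, $\alpha^{q^2}=\zeta^2\alpha$, and $N_r(\alpha)=\alpha\cdot\alpha^q\cdot\alpha^{q^2}=\zeta^3\alpha^3=\alpha^3$. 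Substituting back into the kernel equation yields $\alpha^3=\alpha^{3(1-k)}$, i.e.\ $\alpha^{q-1}=1$, which forces $\alpha\in\mathbb{F}_q^\times$ and hence $[l:m:n]=[1:0:0]$.

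The step I expect to be the main obstacle is precisely this identification $N_r(\alpha)=\alpha^3$: the set $\{\alpha\in\mathbb{F}_{q^3}^\times:\alpha^3\in\mathbb{F}_q^\times\}$ a priori forms a subgroup of order $\gcd(3(q-1),q^3-1)=3(q-1)$, which is strictly larger than $\mathbb{F}_q^\times$, and only the combination of this norm-is-cube identity with the specific exponent $1-k$ appearing in the definition of $\psi_1$ cuts the potential kernel down from $3(q-1)$ elements to the $q-1$ elements of $\mathbb{F}_q^\times$ we actually need. Once this is in hand the rest of the verification is purely formal.
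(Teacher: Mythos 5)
Your proposal is correct, and while the formal verification of well--definedness, image in $\mathcal{C}_r$, homomorphism, and surjectivity--by--cardinality matches the paper, your injectivity argument takes a genuinely different route. The paper proves $\ker(\psi_1)=\{[1:0:0]\}$ by a coordinate computation: it writes out the three polynomial equations expressing that $(l,m,n)^{\odot_r 3}$ is a scalar multiple of $(1,0,0)$ and runs a case analysis on which of $m,n$ vanish, using that $r$ is a non--cube (via $l(rn^3-m^3)=0$) and the generalized Euler criterion $r^{(q-1)/3}\neq 1$ to exclude the residual candidates $[0:1:0]$ and $[0:0:1]$. You instead work structurally in $\mathcal{R}_r\cong\mathbb{F}_{q^3}$, using $N_r(\alpha)=\alpha^{1+q+q^2}$ (already established in the paper's preceding theorem on the order of $\mathcal{C}_r$): the kernel condition forces $\alpha^3\in\mathbb{F}_q^\times$, your lemma $N_r(\alpha)=\alpha^3$ for such $\alpha$ is valid (the cube roots of unity do lie in $\mathbb{F}_q$ since $3\mid q-1$, and $\gcd(3(q-1),q^3-1)=3(q-1)$ as you note), and substituting back gives $\alpha^{q-1}=1$, i.e.\ $\alpha\in\mathbb{F}_q^\times$, hence $[l:m:n]=[1:0:0]$. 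The hypotheses enter exactly where they should: $r$ non--cube for the field isomorphism, $q\equiv 1\pmod 3$ for $\zeta\in\mathbb{F}_q$. Your approach is shorter and explains conceptually why the exponent $\lfloor q/3\rfloor-1$ combined with cubing cuts the kernel down to $\mathbb{F}_q^\times$; the paper's case analysis is more elementary, staying at the level of the defining equations without leaning again on the norm--as--power identity.
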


\begin{proof}
In order for $\psi_1$ to be a group isomorphism, it must be:
\begin{itemize} 
\item well defined: 
$\lfloor q / 3 \rfloor - 1 = (q - 4) / 3$ so that for any $[l : m : n] \in \mathbb{P}_r$, $\lambda \in \mathbb{F}_q^\times$
\begin{align*}
  \psi_1([\lambda l : \lambda m : \lambda n]) 
  & = \big(\lambda^3 N_r(l, m, n)\big)^{(q - 4) / 3} \big(\lambda^3 (l, m, n)^{\odot_r 3}\big) \\
  & = \lambda^{q - 1} \psi_1([l : m : n]) = \psi_1([l : m : n]),
\end{align*}
and $\psi_1\big(\mathbb{P}_r\big) \subseteq \mathcal{C}_r$ because for any $[l : m : n] \in \mathbb{P}_r$
\begin{align*}
  N_r\big(\psi_1([l : m : n])\big) 
  & = N_r(l, m, n)^{q - 4} N_r(l, m, n)^3 \\
  & = N_r(l, m, n)^{q - 1} = 1;
\end{align*}

\item a group homomorphism:
given $[l_1 : m_1 : n_1], [l_2 : m_2 : n_2] \in \mathbb{P}_r$,
\begin{align*}
  \psi_1([l_1\!:\!m_1\!:\!n_1]\!\odot_r\![l_2\!:\!m_2\!:\!n_2]) 
  & = N_r([l_1 : m_1 : n_1] \odot_r [l_2 : m_2 : n_2])^{\lfloor q / 3 \rfloor - 1} \\
  & \qquad \big([l_1 : m_1 : n_1] \odot_r [l_2 : m_2 : n_2]\big)^{\odot_r 3} \\
  & = N_r(l_1, m_1, n_1)^{\lfloor q / 3 \rfloor - 1} (l_1, m_1, n_1)^{\odot_r 3} \\
  & \qquad \odot_r N_r(l_2, m_2, n_2)^{\lfloor q / 3 \rfloor - 1} (l_2, m_2, n_2)^{\odot_r 3} \\
  & = \psi_1([l_1 : m_1 : n_1]) \odot_r \psi_1([l_2 : m_2 : n_2]);
\end{align*}

\item injective:
for any $[l : m : n] \in \mathbb{P}_r$, $\psi_1([l : m : n]) = (1, 0, 0)$ if and only if 
\begin{align*}
  \begin{cases}
  N_r(l, m, n)^{\lfloor q / 3 \rfloor - 1} (l^3\!+\!6 r l m n\!+\!r m^3\!+\!r^2 n^3) = 1, \\
  N_r(l, m, n)^{\lfloor q / 3 \rfloor - 1} (3 l^2 m + 3 r l n^2 + 3 r m^2 n) = 0, \\
  N_r(l, m, n)^{\lfloor q / 3 \rfloor - 1} (3 l^2 n + 3 l m^2 + 3 r m n^2) = 0, \end{cases} 
\end{align*}
with $N_r(l, m, n) \neq 0$, so that:

\begin{itemize}
\item if $m, n \neq 0$, then
\begin{align*}
  \begin{cases}
  l^2 m n + r l n^3 + r m^2 n^2 = 0, \\
  l^2 m n + l m^3 + r m^2 n^2 = 0 \end{cases}
  \Leftrightarrow l (r n^3 - m^3) = 0  
  \Leftrightarrow l = 0,
\end{align*}
since $r$ is not a cube.
However, this implies $m = 0$ or $n = 0$;

\item if $m \neq n = 0$, then from the third equation $l m^2 = 0$, i.e., $l = 0$, so that $[l : m : n] = [0 : 1 : 0]$ and the first equation remains $r^{\lfloor q / 3 \rfloor} = 1$, which is not true because of the generalized Euler criterion;

\item if $n \neq m = 0$, then from the second equation $r l n^2 = 0$, i.e., $l = 0$, so that $[l : m : n] = [0 : 0 : 1]$ and the first equation remains $r^{2 \lfloor q / 3 \rfloor} = 1$.
This means $r^{(q - 1) / 3} = \pm 1$, but $r^{(q - 1) / 3} = - 1$ is not valid since it implies $r^{q - 1} = - 1$, while $r^{(q - 1) / 3} = 1$ is an absurd for the generalized Euler criterion;

\item $m = n = 0$ is the only remaining option, i.e., $\ker(\psi_1) = \{[1 : 0 : 0]\}$;
\end{itemize}

\item surjective:
this is straightforward because it is an injection between two finite groups of the same cardinality $q^2 + q + 1$.
\end{itemize}
In conclusion, $\psi_1$ is a group isomorphism.
\end{proof}

Thus, using the group isomorphism $\psi_1$ also proves that $(\mathcal{C}_r, \odot_r)$ is cyclic of order $q^2 + q + 1$.
This construction allows also to find all the solutions of the cubic Pell equation.
Indeed, it is sufficient to evaluate $\psi_1$ over all the elements of $\mathbb P_r$, which are $[l: m: 1]$ for all $l, m \in \mathbb F_q$, $[l: 1: 0]$ for all $l \in \mathbb F_q$ and $[1: 0: 0]$, as obtained in \cref{eq:cubproj1}.
However, since the explicit inverse is missing, it is difficult to describe each point of the Pell cubic as a point of the projectivization.

\begin{es}
Let us consider $q = 7$ and $r = 2$, which is not a cube in $\mathbb{F}_7$.
Thanks to the previous results we know that the cubic Pell equation
\begin{equation*}
    x^3 + 2 y^3 + 4 z^3 - 6 x y z \equiv 1 \pmod{7},
\end{equation*}
admits $q^2 + q + 1 = 57$ solutions and we are able to find all of them evaluating 
\begin{gather*}
    \psi_1([l : m : 1]), \quad \forall\,l, m \in \mathbb{F}_7,\\
    \psi_1([l : 1 : 0]), \quad \forall\,l \in \mathbb{F}_7,\\
    \psi_1([1 : 0 : 0]) = (1, 0, 0).
\end{gather*}
For instance, for finding a random solution of the cubic Pell equation, we can take two random elements $l, m \in \mathbb F_7$, e.g., $l = 3$ and $m = 5$ and evaluate
\begin{equation*}
    \psi_1([3 : 5 : 1]) = (5, 4, 4).
\end{equation*}
One can check that 
\begin{equation*}
    5^3 + 2 \cdot 4^3 + 4 \cdot 4^3 - 6 \cdot 5 \cdot 4 \cdot 4 \equiv 1 \pmod{7}.
\end{equation*}
Similarly, we can take $l = 4$ and $[4 : 1 : 0] \in \mathbb{P}_2$, so that
\begin{equation*}
    \psi_1([4: 1: 0]) = (2, 4, 1),
\end{equation*}
is another solution of the cubic Pell equation.
\end{es}

Note that for large values of $q$ this method for finding all the solutions of the cubic Pell equation is not efficient, since it has complexity $O(q^2)$, even if it is surely better than an exhaustive search that has complexity $O(q^3)$.

However, for large values of $q$ it is really interesting to use the above method for generating random solutions of the cubic Pell equation since, exploiting $\psi_1$ as in the previous example, we are always able to generate different solutions.

Without our method, the probability that a random triple $(x, y, z) \in \mathbb{F}_q^3$ is a solution of the cubic Pell equation is $\frac{q^2 + q + 1}{q^3}$ that is very low when $q$ is large. 
It is theoretically possible, given a random element $(x, y, z) \in \mathbb{F}_q^3$, to obtain a solution of the cubic Pell equation taking 
\begin{equation*}
    \left( \frac{x}{\sqrt[3]{N_r(x,y,z)}}, \frac{y}{\sqrt[3]{N_r(x,y,z)}}, \frac{z}{\sqrt[3]{N_r(x,y,z)}} \right),
\end{equation*}
but evaluating the cubic root of an element in a finite field is an hard problem for large values of $q$.

\subsection{$r$ cube with three roots in $\mathbb{F}_q$}
\label{ssec:r3roots}

If $q \equiv 1 \pmod{3}$, given $\omega$ primitive cubic root of unity, then $\{1, \omega, \omega^2\} \subset \mathbb{F}_q$.
In addition, if $r$ is a cube, fixed a cubic root $s \in \mathbb{F}_q^\times$ of $r$, then  the other two cubic roots are $\omega s, \omega^2 s$ and $\{s, \omega s, \omega^2 s\} \subseteq \mathbb{F}_q^\times$.
In this case, with a proof analogous to \cref{th:mv2}, we prove the following result.

\begin{theo}
If $q \equiv 1 \pmod{3}$ and $r \in \mathbb{F}_q^\times$ is a cube, then $(\mathcal{C}_r, \odot_r)$ is isomorphic to $\mathbb{F}_q^\times \times \mathbb{F}_q^\times$.
\end{theo}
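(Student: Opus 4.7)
The plan is to mimic the quadratic proof (\cref{th:mv2}) by exploiting the fact that, when $r$ is a cube and $\mathbb{F}_q$ contains all cube roots of unity, the polynomial $t^3 - r$ splits completely in $\mathbb{F}_q[t]$, and hence the norm $N_r$ factors linearly.

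First, I fix a cubic root $s \in \mathbb{F}_q^\times$ of $r$, so that $\{s, \omega s, \omega^2 s\}$ are the three cubic roots of $r$ in $\mathbb{F}_q$, and write the factorization
\begin{align*}
  N_r(x, y, z) = (x + s y + s^2 z)(x + \omega s y + \omega^2 s^2 z)(x + \omega^2 s y + \omega s^2 z).
\end{align*}
Next, by the Chinese remainder theorem (applied to the three coprime factors $t - s$, $t - \omega s$, $t - \omega^2 s$), there is a ring isomorphism
\begin{align*}
  \mathcal{R}_r \;\xrightarrow{\;\sim\;}\; \mathbb{F}_q \times \mathbb{F}_q \times \mathbb{F}_q, \qquad
  x + y t + z t^2 \longmapsto (u, v, w),
\end{align*}
where $u = x + s y + s^2 z$, $v = x + \omega s y + \omega^2 s^2 z$, $w = x + \omega^2 s y + \omega s^2 z$. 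The linear change of coordinates is invertible because its matrix is a (nonzero-scaled) Vandermonde in the distinct values $s, \omega s, \omega^2 s$.

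Under this isomorphism, the Brahmagupta product $\odot_r$ (which is just multiplication in $\mathcal{R}_r$) corresponds to componentwise multiplication, and the norm $N_r(x, y, z) = u v w$. Therefore the Pell cubic $\mathcal{C}_r$ is in bijection with
\begin{align*}
  \{(u, v, w) \in \mathbb{F}_q^3 \,|\, u v w = 1\} \subset (\mathbb{F}_q^\times)^3,
\end{align*}
and the induced bijection
\begin{align*}
  \big(\mathcal{C}_r, \odot_r\big) \;\xrightarrow{\;\sim\;}\; \big(\mathbb{F}_q^\times \times \mathbb{F}_q^\times, \cdot\big), \qquad
  (x, y, z) \longmapsto (u, v),
\end{align*}
is a group homomorphism (with the third coordinate recovered as $w = (u v)^{-1}$). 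Its inverse sends $(u, v) \in \mathbb{F}_q^\times \times \mathbb{F}_q^\times$ to the unique triple $(x, y, z)$ obtained by inverting the Vandermonde system with right-hand side $(u, v, (uv)^{-1})$, giving explicit closed-form expressions for $x$, $y$, $z$ in terms of $u$, $v$, $s$, $\omega$.

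The main obstacle is essentially bookkeeping: one must verify that the linear change of variables is well defined over $\mathbb{F}_q$ (which uses $\omega \in \mathbb{F}_q$, guaranteed by $q \equiv 1 \pmod 3$), that it intertwines $\odot_r$ with componentwise multiplication (which is a direct consequence of the CRT isomorphism being a ring map), and that the final projection $(u, v, w) \mapsto (u, v)$ on the subvariety $uvw = 1$ is a group isomorphism onto $\mathbb{F}_q^\times \times \mathbb{F}_q^\times$. No genuine difficulty appears beyond checking that these standard pieces fit together, since once the CRT reduction is in place the analogy with \cref{th:mv2} is complete; the only substantive new point is that the fibre of $N_r = 1$ inside $(\mathbb{F}_q^\times)^3$ has dimension two rather than one.
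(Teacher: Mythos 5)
Your proposal is correct and follows essentially the same route as the paper: diagonalize the norm via the splitting $t^3 - r = (t-s)(t-\omega s)(t-\omega^2 s)$, identify $\mathcal{C}_r$ with $\{(u,v,w) \in (\mathbb{F}_q^\times)^3 \mid uvw = 1\}$, and project onto $(u,v)$. The only cosmetic difference is that you phrase the linear change of variables through the CRT ring isomorphism (which the paper reserves for the companion result on $\mathbb{P}_r$), while the paper writes the factorization of $N_r$ and its Vandermonde inverse directly.
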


\begin{proof}
Fixed a cubic root $s \in \mathbb{F}_q^\times$ of $r$, the norm of a point $(x, y, z) \in \mathcal{C}_r$ can be written as
\begin{align*}
  1 & = x^3 - 3 r x y z + r y^3 + r^2 z^3 \\
  & = (x + \omega s y + \omega^2 s^2 z) (x + \omega^2 s y + \omega s^2 z) (x + s y + s^2 z) 
  = u v w,
\end{align*}
so that
\begin{align*}
  x = \frac{w + v + u}{3}, 
  \quad y = \frac{w + \omega v + \omega^2 u}{3 s}, 
  \quad z = \frac{w + \omega^2 v + \omega u}{3 s^2},
\end{align*}
is a bijective correspondence between the points $(x, y, z) \in \mathcal{C}_r$ and $(u, v, w) \in \mathbb{F}_q^3$ such that $u v w = 1$.
This equation has exactly $(q - 1)^2$ solutions in $\mathbb{F}_q^3$ and, in particular, a unique solution for each $(u, v) \in \mathbb{F}_q^\times \times \mathbb{F}_q^\times$.
Thus,
\begin{gather*}
    \big(\mathcal{C}_r, \odot_r\big) 
    \;\cong\; \mathbb{F}_q^\times \times \mathbb{F}_q^\times \\
  (x, y, z) \longmapsto (x + \omega s y + \omega^2 s^2 z, x + \omega^2 s y + \omega s^2 z), \\
  \left(\frac{1 + u v^2 + u^2 v}{3 u v}, 
  \frac{1 + \omega u v^2 + \omega^2 u^2 v}{3 s u v}, 
  \frac{1 + \omega^2 u v^2 + \omega u^2 v}{3 s^2 u v}, \right)
  \longmapsfrom (u, v),
\end{gather*}
is bijective and also a group homomorphism.
\end{proof}

When considering the projectivization $\mathbb{P}_r$, it is clear from \cref{eq:cubproj2} that 
\begin{align*}
  \#\mathbb{P}_r = q^2 + q + 1 - 3 q = (q - 1)^2.
\end{align*}
This is confirmed by the following result, obtained analogously to \cref{th:conpr}.

\begin{theo}
If $q \equiv 1 \pmod{3}$ and $r \in \mathbb{F}_q^\times$ is a cube, then $(\mathbb{P}_r, \odot_r)$ is isomorphic to $\mathbb{F}_q^\times \times \mathbb{F}_q^\times$.
\end{theo}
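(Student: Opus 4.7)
The plan is to mirror the proof of \cref{th:conpr}, replacing the quadratic factorization with the cubic one provided by the hypothesis. Since $r$ admits three cubic roots $s, \omega s, \omega^2 s$ in $\mathbb{F}_q$, the polynomial $t^3 - r$ splits completely, and the Chinese Remainder Theorem gives a ring isomorphism
\begin{align*}
  \mathcal{R}_r = \mathbb{F}_q[t] / \langle t^3 - r \rangle
  & \xrightarrow{\;\sim\;} \mathbb{F}_q \times \mathbb{F}_q \times \mathbb{F}_q, \\
  x + y t + z t^2
  & \longmapsto \big(x + s y + s^2 z,\; x + \omega s y + \omega^2 s^2 z,\; x + \omega^2 s y + \omega s^2 z\big).
\end{align*}

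Restricting to units, which under CRT correspond exactly to triples with all three coordinates nonzero (equivalently, to elements $x + yt + zt^2$ with $N_r(x,y,z) \neq 0$), and then passing to the quotient by the diagonal $\mathbb{F}_q^\times$, one obtains
\begin{align*}
  \big(\mathbb{P}_r, \odot_r\big) \cong \mathcal{R}_r^{\odot_r} / \mathbb{F}_q^\times \cong (\mathbb{F}_q^\times)^3 / \mathbb{F}_q^\times \cong \mathbb{F}_q^\times \times \mathbb{F}_q^\times,
\end{align*}
matching the cardinality $(q-1)^2$ already computed from \cref{eq:cubproj2}.

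To make the isomorphism explicit, I would present the map
\begin{align*}
  [l : m : n]
  \longmapsto \left(\frac{l + \omega s m + \omega^2 s^2 n}{l + s m + s^2 n},\; \frac{l + \omega^2 s m + \omega s^2 n}{l + s m + s^2 n}\right),
\end{align*}
whose inverse is obtained by picking the representative normalized so that $l + s m + s^2 n = 1$ and inverting the resulting Vandermonde system over $\{1, \omega, \omega^2\}$, yielding
\begin{align*}
  l = \frac{1 + u + v}{3},\quad m = \frac{1 + \omega^2 u + \omega v}{3 s},\quad n = \frac{1 + \omega u + \omega^2 v}{3 s^2}.
\end{align*}

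The main bookkeeping obstacle is confirming that the three $\mathbb{F}_q^\times$ factors in the CRT decomposition correspond precisely to the three excluded classes of representatives enumerated in \cref{eq:cubproj2}: each factor arises from the quotient by the maximal ideal $\langle t - \omega^i s \rangle$, and the elements belonging to such an ideal are exactly those that must be excised from $\mathbb{P}_r$ (with the overlaps among the three ideals matching the duplicate removal performed there). Once this correspondence is checked, the homomorphism property is automatic, since $\odot_r$ on $\mathcal{R}_r$ becomes componentwise multiplication under CRT.
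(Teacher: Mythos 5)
Your proposal is correct and follows essentially the same route as the paper: factor $t^3 - r = (t-s)(t-\omega s)(t-\omega^2 s)$, apply the Chinese Remainder Theorem to get $\mathcal{R}_r \cong \mathbb{F}_q^3$, restrict to units and quotient by the diagonal $\mathbb{F}_q^\times$ to obtain $(\mathbb{P}_r,\odot_r) \cong (\mathbb{F}_q^\times)^3/\mathbb{F}_q^\times \cong \mathbb{F}_q^\times \times \mathbb{F}_q^\times$, with the same explicit map and (up to the projective scaling by $3s^2$) the same inverse. The only difference is cosmetic: your closing check that the excised classes of \cref{eq:cubproj2} match the three maximal ideals is a sanity check the paper performs separately via the cardinality count $\#\mathbb{P}_r = (q-1)^2$, not inside this proof.
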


\begin{proof}
Fixed $s$ cubic root of $r$ in $\mathbb{F}_q$, $t^3 - r$ is reducible over $\mathbb{F}_q$ as
\begin{align*}
  t^3 - r = (t - s) (t - \omega s) (t - \omega^2 s),
\end{align*}
so that, using the Chinese remainder theorem, there is the ring isomorphism
\begin{align*}
  \mathcal{R}_r = \mathbb{F}_q[t] / \langle t^3 - r \rangle
  & \xrightarrow{\;\sim\;} \mathbb{F}_q[t] / \langle t - s \rangle \times \mathbb{F}_q[t] / \langle t - \omega s \rangle \times \mathbb{F}_q[t] / \langle t - \omega^2 s \rangle, \\
  x + y t + z t^2 
  & \longmapsto (x + s y + s^2 z, x + \omega s y + \omega^2 s^2 z, x + \omega^2 s y + \omega s^2 z).
\end{align*}
In addition, $\mathbb{F}_q[t] / \langle t - s \rangle \cong \mathbb{F}_q[t] / \langle t - \omega s \rangle \cong \mathbb{F}_q[t] / \langle t - \omega^2 s \rangle \cong \mathbb{F}_q$, and when passing to the quotients there is the group isomorphism
\begin{align*}
  \big(\mathbb{P}_r, \odot_r\big)
  \cong \mathcal{R}_r^{\odot_r}/\mathbb{F}_q^\times
  & \;\cong\; (\mathbb{F}_q^\times \times \mathbb{F}_q^\times \times \mathbb{F}_q^\times) / \mathbb{F}_q^\times
  \cong \mathbb{F}_q^\times \times \mathbb{F}_q^\times, \\
  [l : m : n] = \{\lambda (l + m t + n t^2) \,|\, \lambda \neq 0\}
  & \mapsto \left(\!\frac{l\!+\!\omega s m\!+\!\omega^2\!s^2 n}{l + s m + s^2 n},\!
  \frac{l\!+\!\omega^2\!s m\!+\!\omega s^2 n}{l + s m + s^2 n}\!\right)\!, \\
  [s^2\!(1\!+\!v\!+\!u)\!:\!s (1\!+\!\omega v\!+\!\omega^2\!u)\!:\!1\!+\!\omega^2\!v\!+\!\omega u]
  & \mapsfrom (u, v).
\end{align*}
\end{proof}

Combining the obtained results gives the explicit group isomorphism
\begin{align*}
  \psi_2\!:\!\big(\mathbb{P}_r,\!\odot_r\big)\!\xrightarrow{\sim} 
  & \big(\mathcal{C}_r, \odot_r\big), \\
  [l : m : n] \mapsto 
  & \left(\frac{l^3 + 2 s^2 l (m^2 + s m n + s^2 n^2) + s^4 m n (m + s n)}{N_r(l, m, n)},\right.\\
  & \qquad\quad \frac{s^2 m^3 + 2 m (l^2 + s^2 l n + s^4 n^2) + s l n (l + s^2 n)}{N_r(l, m, n)}, \\
  & \qquad\qquad \left.\frac{s^5 n^3 + 2 s n (l^2 + s l m + s^2 m^2) + l m (l + s m)}{s N_r(l, m, n)}\right),
\end{align*}
where the sum of the numerators is $(l + s m + s^2 n)^3$. 
The inverse is given by
\begin{align*}
  \psi_2^{- 1} : \big(\mathcal{C}_r, \odot_r\big) \xrightarrow{\sim} 
  & \big(\mathbb{P}_r, \odot_r\big), \\
  (x, y, z) \mapsto 
  & \big[s^2 (1\!+\!2 x\!-\!s y\!-\!s^2 z) : s (1\!-\!x\!+\!2 s y\!-\!s^2 z) : 1\!-\!x\!-\!s y\!+\!2 s^2 z\big].
\end{align*}

The group isomorphism $\psi_2$ allows to find all the solutions of the cubic Pell equation: it is sufficient to evaluate $\psi_2$ over all the elements of $\mathbb P_r$ described explicitly in \cref{eq:cubproj2}.
In addition, differently from the previous case, the explicit inverse of the group isomorphism can be used to describe each point of the Pell cubic with two thirds of the size with respect to the classic notation for the points in $\mathbb{F}_q^3$.

\begin{es}
Let us consider $q = 13$ and $r = 5$, which is the cube of $\{7, 8, 11\}$ in $\mathbb{F}_{13}$.
Thanks to the previous results we know that the cubic Pell equation
\begin{equation*}
    x^3 + 5 y^3 - z^3 - 2 x y z \equiv 1 \pmod{13},
\end{equation*}
admits $(q - 1)^2 = 144$ solutions and we are able to find all of them evaluating 
\begin{gather*}
    \psi_2([l : m : 1]), \quad \forall\, m \in \mathbb{F}_{13},\, l \in \mathbb{F}_{13} \smallsetminus \{- 7 m + 3, - 8 m + 1, - 11 m + 9\}, \\
    \psi_2([l : 1 : 0]), \quad \forall\,l \in \mathbb{F}_{13} \smallsetminus \{- 7, - 8, - 11\},\\
    \psi_2([1 : 0 : 0]) = (1, 0, 0).
\end{gather*}
For instance, for finding a random solution of the cubic Pell equation, we can take a random $m \in \mathbb{F}_{13}$, e.g., $m = 3$, and another element $l \in \mathbb{F}_{13} \smallsetminus \{8, 3, 2\}$, e.g., $l = 9$, and evaluate
\begin{equation*}
    \psi_2([9 : 3 : 1]) = (3, 4, 3).
\end{equation*}
One can check that 
\begin{equation*}
    3^3 + 5 \cdot 4^3 - 3^3 - 2 \cdot 3 \cdot 4 \cdot 3 \equiv 1 \pmod{13}.
\end{equation*}
Similarly, we can take $l = 4 \not\in \{6, 5, 2\}$ and $[4 : 1 : 0] \in \mathbb{P}_5$, so that
\begin{equation*}
    \psi_1([4: 1: 0]) = (10, 4, 9),
\end{equation*}
is another solution of the cubic Pell equation.
\end{es}

\subsection{$r$ cube with one root in $\mathbb{F}_q$} 
\label{ssec:r1root}

If $q \not\equiv 1 \pmod{3}$, then $\mathbb{F}_q$ does not contain any non--trivial cubic root of unity.
In addition, each $r \in \mathbb{F}_q^\times$ is a cube and has only one cubic root $s$ in $\mathbb{F}_q$.

In this case, \cref{eq:cubproj3} holds and the projectivization $\mathbb{P}_r$ has
\begin{align*}
  \#\mathbb{P}_r = q^2 + q + 1 - (q + 2) = q^2 - 1,
\end{align*}
unless there is a $m \in \mathbb{F}_q$ such that $[- (m + s) s : m : 1] = [s^2 : s : 1] \Leftrightarrow 3 s^2 = 0$, which is satisfied only when $q = 3^k$, in which case $\#\mathbb{P}_r = q^2$.
This result is also confirmed by the following statement, obtained analogously to \cref{th:conpr}.

\begin{theo}
If $q \not\equiv 1 \pmod{3}$, $q \neq 3$ and $r \in \mathbb{F}_q^\times$, then $(\mathbb{P}_r, \odot_r)$ is a cyclic group of order $q^2 - 1$.
\end{theo}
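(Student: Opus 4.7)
The plan is to mirror the argument of \cref{th:conpr}: find a coprime factorization of $t^3 - r$ over $\mathbb{F}_q[t]$, apply the Chinese remainder theorem, and then quotient by the scalar subgroup $\mathbb{F}_q^\times$. Let $s \in \mathbb{F}_q^\times$ be the unique cube root of $r$ (which exists by the generalized Euler criterion recalled at the start of \cref{ssec:r1root}, since $q \not\equiv 1 \pmod 3$). The hypothesis $q \neq 3$ guarantees $3 \neq 0$ in $\mathbb{F}_q$, so
\[
  t^3 - r = (t - s)(t^2 + s t + s^2)
\]
is a factorization into coprime pieces (their resultant is $3 s^2 \neq 0$). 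The quadratic factor is irreducible over $\mathbb{F}_q$, because its roots in the algebraic closure are $s\omega$ and $s\omega^2$, and a primitive cube root of unity $\omega$ does not lie in $\mathbb{F}_q$ under $q \not\equiv 1 \pmod 3$.

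The Chinese remainder theorem then provides a ring isomorphism
\[
  \mathcal{R}_r \;\xrightarrow{\;\sim\;}\; \mathbb{F}_q[t]/\langle t - s\rangle \times \mathbb{F}_q[t]/\langle t^2 + s t + s^2\rangle \;\cong\; \mathbb{F}_q \times \mathbb{F}_{q^2},
\]
sending $x + y t + z t^2 \mapsto \bigl(x + s y + s^2 z,\; (x - s^2 z) + (y - s z)\bar{t}\bigr)$, where $\bar{t}^2 = -s\bar{t} - s^2$. Restricting to units yields $\mathcal{R}_r^{\odot_r} \cong \mathbb{F}_q^\times \times \mathbb{F}_{q^2}^\times$. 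Under the CRT map the scalars $\mathbb{F}_q^\times \subset \mathcal{R}_r^{\odot_r}$ embed diagonally as $\{(\lambda,\lambda)\}$, so passing to the projectivization gives
\[
  (\mathbb{P}_r, \odot_r) \;\cong\; \mathcal{R}_r^{\odot_r}/\mathbb{F}_q^\times \;\cong\; (\mathbb{F}_q^\times \times \mathbb{F}_{q^2}^\times)/\mathbb{F}_q^\times \;\cong\; \mathbb{F}_{q^2}^\times
\]
via $[(a, b)] \mapsto b/a$ (identifying $a \in \mathbb{F}_q^\times$ with its image in $\mathbb{F}_{q^2}^\times$). Since $\mathbb{F}_{q^2}^\times$ is cyclic of order $q^2 - 1$, so is $(\mathbb{P}_r, \odot_r)$.

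The one substantive step is the irreducibility/coprimality check, which is exactly where both hypotheses enter: $q \not\equiv 1 \pmod 3$ excludes $\omega \in \mathbb{F}_q$, forcing $t^2 + s t + s^2$ to stay irreducible, while $q \neq 3$ rules out the characteristic-$3$ degeneracy $t^3 - r = (t - s)^3$ and so secures coprimality. Writing down an explicit inverse $\mathbb{F}_{q^2}^\times \to \mathbb{P}_r$ analogous to $u \mapsto [s(1 + u) : 1 - u]$ from the quadratic case is a routine inversion of the CRT map (using $3 s^2 \neq 0$ to solve the resulting linear system in the coordinates $l, m, n$), and I would include it only if an explicit parametrization of the Pell cubic is needed later.
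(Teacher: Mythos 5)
Your proposal is correct and follows essentially the same route as the paper: factor $t^3 - r = (t - s)(t^2 + s t + s^2)$, apply the Chinese remainder theorem to get $\mathcal{R}_r \cong \mathbb{F}_q \times \mathbb{F}_{q^2}$, and quotient the unit group by the diagonal copy of $\mathbb{F}_q^\times$ to obtain $(\mathbb{P}_r, \odot_r) \cong \mathbb{F}_{q^2}^\times$. The only differences are cosmetic: you spell out the irreducibility/coprimality checks (which the paper leaves implicit) and omit the explicit inverse parametrization (which the paper writes down), but the argument is the same.
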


\begin{proof}
Given $s$ cubic root of $r$ in $\mathbb{F}_q$, $t^3 - r$ is reducible over $\mathbb{F}_q$ as
\begin{align*}
  t^3 - r = (t - s) (t^2 + s t + s^2),
\end{align*}
so that, using the Chinese remainder theorem, there is the ring isomorphism
\begin{align*}
  \mathcal{R}_r = \mathbb{F}_q[t] / \langle t^3 - r \rangle
  & \xrightarrow{\;\sim\;} \mathbb{F}_q[t] / \langle t - s \rangle \times \mathbb{F}_q[t] / \langle t^2 + s t + s^2 \rangle, \\
  x + y t + z t^2 
  & \longmapsto \big(x + s y + s^2 z, x - s^2 z + (y - s z) t\big).
\end{align*}
In addition, $\mathbb{F}_q[t] / \langle t - s \rangle \cong \mathbb{F}_q$ and $\mathbb{F}_q[t] / \langle t^2 + s t + s^2 \rangle \cong \mathbb{F}_{q^2}$, and when passing to the quotients there is the group isomorphism
\begin{align*}
  \big(\mathbb{P}_r, \odot_r\big) 
  \cong \mathcal{R}_r^{\odot_r}/\mathbb{F}_q^\times
  & \;\cong\; (\mathbb{F}_q^\times \times \mathbb{F}_{q^2}^\times) / \mathbb{F}_q^\times
  \cong \mathbb{F}_{q^2}^\times, \\
  [l : m : n] = \{\lambda (l + m t + n t^2) \,|\, \lambda \neq 0\}
  & \longmapsto \left(\frac{l - s^2 n}{l + s m + s^2 n}, \frac{m - s n}{l + s m + s^2 n}\right), \\
  [s^2 (1\!-\!s v\!+\!2 u)\!:\!s (1\!+\!2 s v\!-\!u)\!:\!1\!-\!s v\!-\!u]
  & \longmapsfrom (u, v).
\end{align*}
\end{proof}

The relation with the Pell cubic when $p \neq 3$ is given by the following result.

\begin{theo}
If $q \equiv 2 \pmod{3}$ and $r \in \mathbb{F}_q^\times$, then there is the group isomorphism
\begin{align*}
  \psi_3 : \big(\mathbb{P}_r, \odot_r\big) 
  & \xrightarrow{\;\sim\;} \big(\mathcal{C}_r, \odot_r\big), \\
  [l : m : n] 
  & \longmapsto N_r(l, m, n)^{\lfloor q / 3 \rfloor} (l, m, n).
\end{align*}
\end{theo}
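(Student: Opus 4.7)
The plan is to verify the four conditions for $\psi_3$ to be a group isomorphism, in direct parallel with the argument for $\psi_1$ given earlier. The whole computation rests on a single arithmetic identity: since $q \equiv 2 \pmod{3}$, we have $\lfloor q/3\rfloor = (q-2)/3$, hence $3\lfloor q/3\rfloor + 1 = q - 1$. Note also that $q \equiv 2 \pmod 3$ excludes $q = 3$, so the counting result $\#\mathbb{P}_r = q^2 - 1$ from the previous theorem is available.

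For well-definedness on $\mathbb{P}_r$, scaling a representative by $\lambda \in \mathbb{F}_q^\times$ produces the factor $\lambda^{3\lfloor q/3\rfloor}\cdot \lambda = \lambda^{q-1} = 1$, so $\psi_3([\lambda l:\lambda m:\lambda n]) = \psi_3([l:m:n])$. The image lies in $\mathcal{C}_r$ because $N_r(\psi_3([l:m:n])) = N_r(l,m,n)^{3\lfloor q/3\rfloor + 1} = N_r(l,m,n)^{q-1} = 1$. The homomorphism property is a one-line consequence of the multiplicativity of $N_r$ and the fact that $\odot_r$ is $\mathbb{F}_q$-bilinear on coordinates, exactly as in the $\psi_1$ case.

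For bijectivity, injectivity is notably easier than in the non-cube case because $\psi_3$ only rescales $(l,m,n)$ rather than cubing it: if $\psi_3([l:m:n]) = (1,0,0)$, then $N_r(l,m,n)^{\lfloor q/3\rfloor}\,m = N_r(l,m,n)^{\lfloor q/3\rfloor}\,n = 0$, and since $N_r(l,m,n) \neq 0$ by definition of $\mathbb{P}_r$, we get $m = n = 0$, so $[l:m:n] = [1:0:0]$. Surjectivity is essentially free: for any $(x,y,z) \in \mathcal{C}_r$ we have $N_r(x,y,z) = 1 \neq 0$, so $[x:y:z] \in \mathbb{P}_r$, and $\psi_3([x:y:z]) = 1^{\lfloor q/3\rfloor}(x,y,z) = (x,y,z)$.

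There is no real obstacle here; in fact the argument is considerably cleaner than the one for $\psi_1$, whose kernel analysis required splitting into subcases according to the vanishing of $m$ and $n$ and invoking the generalized Euler criterion. As a bonus, the surjectivity argument simultaneously yields the explicit inverse $\psi_3^{-1}(x,y,z) = [x:y:z]$ and, combined with the previous theorem, forces $\#\mathcal{C}_r = \#\mathbb{P}_r = q^2 - 1$.
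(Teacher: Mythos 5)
Your proposal is correct and follows essentially the same route as the paper: the same identity $3\lfloor q/3\rfloor+1=q-1$ drives well-definedness and the norm computation, and injectivity is the same immediate argument from $N_r(l,m,n)\neq 0$. Your surjectivity step is a slightly streamlined version of the paper's (which constructs preimages case by case as $[1:0:0]$, $[x/y:1:0]$, $[x/z:y/z:1]$), but since $N_r(x,y,z)=1$ your direct observation $\psi_3([x:y:z])=(x,y,z)$ yields exactly the same inverse, namely the classic projectivization.
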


\begin{proof}
In order for $\psi_3$ to be a group isomorphism, it must be:
\begin{itemize}
\item well defined:
$\lfloor q / 3 \rfloor = (q - 2) / 3$ so that for any $[l : m : n] \in \mathbb{P}_r$, $\lambda \in \mathbb{F}_q^\times$
\begin{align*}
  \psi_3\big([\lambda l : \lambda m : \lambda n]\big) 
  & = \big(\lambda^3 N_r(l, m, n)\big)^{(q - 2) / 3} \lambda (l, m, n) \\
  & = \lambda^{q - 1} \psi_3([l : m : n]) 
  = \psi_3([l : m : n]),
\end{align*}
and $\psi_3\big(\mathbb{P}_r\big) \subseteq \mathcal{C}_r$ because for any $[l : m : n] \in \mathbb{P}_r$
\begin{align*}
  N_r\big(\psi_3([l : m : n])\big) 
  & = N_r(l, m, n)^{q - 2} N_r(l, m, n) \\
  & = N_r(l, m, n)^{q - 1} = 1;
\end{align*}

\item a group homomorphism:
given $[l_1 : m_1 : n_1], [l_2 : m_2 : n_2] \in \mathbb{P}_r$,
\begin{align*}
  \psi_3([l_1 : m_1 : n_1] \odot_r [l_2 : m_2 : n_2]) 
  & = N_r([l_1 : m_1 : n_1] \odot_r [l_2 : m_2 : n_2])^{\lfloor q / 3 \rfloor} \\
  & \qquad [l_1 : m_1 : n_1] \odot_r [l_2 : m_2 : n_2] \\
  & = N_r(l_1, m_1, n_1)^{\lfloor q / 3 \rfloor} (l_1, m_1, n_1) \\
  & \qquad \odot_r N_r(l_2, m_2, n_2)^{\lfloor q / 3 \rfloor} (l_2, m_2, n_2) \\
  & = \psi_3([l_1 : m_1 : n_1]) \odot_r \psi_3([l_2 : m_2 : n_2]);
\end{align*}

\item injective:
for any $[l : m : n] \in \mathbb{P}_r$, $N_r(l, m, n) \neq 0$ and
\begin{align*}
  \psi_3([l : m : n]) = (1, 0, 0) 
  & \Leftrightarrow \begin{cases}
  N_r(l, m, n)^{\lfloor q / 3 \rfloor} l = 1, \\
  N_r(l, m, n)^{\lfloor q / 3 \rfloor} m = 0, \\
  N_r(l, m, n)^{\lfloor q / 3 \rfloor} n = 0 \end{cases} \\
  & \Leftrightarrow \begin{cases}
  (l^3)^{(q - 2) / 3} l = 1, \\
  m = 0, \\
  n = 0, \end{cases} \\
  & \Leftrightarrow [l : m : n] = [1 : 0 : 0];
\end{align*}

\item surjective:
\begin{itemize}
\item $x^3 = 1$ admits only the solution $x = 1$, so that $(1, 0, 0)$ is the only point of $\mathcal{C}_r$ with $y = z = 0$, as well as $[1 : 0 : 0]$ in $\mathbb{P}_r$;

\item if $z = 0$ but $y \neq 0$, then the preimage is of the form $[l : 1 : 0]$ and
\begin{align*}
  \begin{cases}
  x = (l^3 + r)^{\lfloor q / 3 \rfloor} l, \\
  y = (l^3 + r)^{\lfloor q / 3 \rfloor} \end{cases} 
  \Rightarrow l = \frac{x}{y};
\end{align*} 

\item if $z \neq 0$ then 
\begin{align*}
  \begin{cases}
  x = N_r(l, m, 1)^{\lfloor q / 3 \rfloor} l, \\
  y = N_r(l, m, 1)^{\lfloor q / 3 \rfloor} m, \\
  z = N_r(l, m, 1)^{\lfloor q / 3 \rfloor} \end{cases} 
  \Rightarrow \begin{cases}
  l = x / z, \\
  m = y / z.\end{cases}
\end{align*}
\end{itemize}
\end{itemize}
In conclusion, $\psi_3$ is a group isomorphism with inverse the classic projectivization 
\begin{align*}
  \psi_3^{-1} : \big(\mathcal{C}_r, \odot_r\big) 
  & \xrightarrow{\;\sim\;} \big(\mathbb{P}_r, \odot_r\big), \\
  (1, 0, 0) & \longmapsto [1 : 0 : 0], \\
  (x, y, 0) & \longmapsto [x / y : 1 : 0], \\
  (x, y, z) & \longmapsto [x / z : y / z : 1].
\end{align*}
\end{proof}

\begin{rem}
When $q = p ^k$ with $p = 3$, with an analogous proof, the obtained group isomorphism is 
\begin{align*}
  \psi_3' : \big(\mathbb{P}_r, \odot_r\big) 
  & \xrightarrow{\;\sim\;} \big(\mathcal{C}_r, \odot_r\big), \\
  [l : m : n] 
  & \longmapsto N_r(l, m, n)^{q / 3 - 1} (l, m, n)^{\odot_r 2}.
\end{align*}
\end{rem}

Thanks to the group isomorphism $\psi_3$, the properties of $\big(\mathbb{P}_r, \odot_r\big)$ are inherited by $\big(\mathcal{C}_r, \odot_r\big)$, i.e, it is cyclic with $q^2 - 1$ elements.
In addition, it allows to find all the solutions of the cubic Pell equation by simply evaluating $\psi_3$ over all the elements of $\mathbb P_r$, which are described explicitly in \cref{eq:cubproj3}.
As in the previous case, the explicit inverse can be used to describe each point of the Pell cubic with two thirds of the size of points in $\mathbb{F}_q^3$.

\begin{es}
Let us consider $q = 11$ and $r = 9$, which is the cube of $4$ in $\mathbb{F}_{11}$.
Thanks to the previous results we know that the cubic Pell equation
\begin{equation*}
    x^3 + 9 y^3 + 4 z^3 + 6 x y z \equiv 1 \pmod{11},
\end{equation*}
admits $q^2 - 1 = 120$ solutions and we are able to find all of them evaluating 
\begin{gather*}
    \psi_3([l : m : 1]), \quad \forall\, m \in \mathbb{F}_{11},\, l \in \mathbb{F}_{11} \smallsetminus \{- 4 m + 5\},\, (l, m) \neq (5, 4), \\
    \psi_3([l : 1 : 0]), \quad \forall\,l \in \mathbb{F}_{11} \smallsetminus \{- 4\},\\
    \psi_3([1 : 0 : 0]) = (1, 0, 0).
\end{gather*}
For instance, for finding a random solution of the cubic Pell equation, we can take a random $m \in \mathbb{F}_{11}$, e.g., $m = 2$, and another element $l \in \mathbb{F}_{11} \smallsetminus \{8\}$, e.g., $l = 7$, and evaluate
\begin{equation*}
    \psi_3([7 : 2 : 1]) = (9, 1, 6).
\end{equation*}
One can check that $9^3 + 9 \cdot 1^3 + 4 \cdot 6^3 + 6 \cdot 9 \cdot 1 \cdot 6 \equiv 1 \pmod{11}$.

Similarly, we can take $l = 3 \neq 7$ and $[3 : 1 : 0] \in \mathbb{P}_9$, so that
\begin{equation*}
    \psi_1([3: 1: 0]) = (4, 5, 0),
\end{equation*}
is another solution of the cubic Pell equation.
\end{es}

\bibliographystyle{abbrv}
\bibliography{biblio}

\end{document}